\newtheorem{theorem}{Theorem}[section]
\newtheorem{lemma}[theorem]{Lemma}
\newtheorem{proposition}[theorem]{Proposition}
\newtheorem{corollary}[theorem]{Corollary}
\theoremstyle{definition}
\theoremstyle{definition}
\theoremstyle{remark}
\newcommand{\eqdef}{\ensuremath{\stackrel{\mbox{\upshape\tiny def.}}{=}}}
\newcommand{\mres}{\mathbin{\vrule height 1.6ex depth 0pt width
		0.13ex\vrule height 0.13ex depth 0pt width 1.3ex}}
\newcommand{\norm}[1]{\left\lVert#1\right\rVert}
\newcommand{\inner}[1]{\left\langle#1\right\rangle}
\newcommand{\prox}[1]{\mathop{\text{prox}_{#1}}}
\newcommand{\TV}{\mathop{\text{\rm TV}}}
\newcommand{\BV}{\mathop{\text{\rm BV}}}
\newcommand{\Per}{\mathop{\text{\rm Per}}}
\def\dd{{\rm d}}
\def\argmin{\mathop{\rm argmin}}
\def\ddiv{\mathop{\rm div}}
\def\inf{\mathop{\rm inf}}
\def\min{\mathop{\rm min}}
\begin{document}
%
\title[The TV-Wasserstein problem]{The Total Variation-Wasserstein problem:\\ a new derivation of the Euler-Lagrange equations}
%
%

%
%
%

\author{Antonin Chambolle}
\email{chambolle@ceremade.dauphine.fr}
\author{Vincent Duval}
\email{vincent.duval@inria.fr}
\author{João-Miguel Machado}
\email{joao-miguel.machado@ceremade.dauphine.fr}
\address{CEREMADE, CNRS, Universit\'e Paris-Dauphine, Universit\'e PSL, 75016 PARIS, FRANCE}
\address{Inria}
\thanks{The authors are members of the Inria MOKAPLAN team. V.D. gratefully acknowledges support from the Agence Nationale de la Recherche (CIPRESSI, ANR-19-CE48-0017-01). }

\maketitle              
\begin{abstract}
    In this work we analyze the Total Variation-Wasserstein minimization problem. We propose an alternative form of deriving optimality conditions from the approach of~\cite{carlier2019total}, and as result obtain further regularity for the quantities involved. In the sequel we propose an algorithm to solve this problem alongside two numerical experiments. 
\end{abstract}
%
%
%
\section{Introduction}
The Wasserstein gradient flow of the total variation functional has been studied in a series of recent papers~\cite{burger2012regularization,benning2013primal,carlier2019total}, for applications in image processing. In the present paper, we revisit the work of Carlier \& Poon~\cite{carlier2019total} and derive Euler-Lagrange equations for the problem: given $\Omega \subset \mathbb{R}^d$ open, bounded and convex, $\tau > 0$ and an absolutely continuous probability measure $\rho_0 \in \mathcal{P}(\Omega)$
\begin{equation}\label{TV-W}
	\tag{TV-W}
	\inf_{\rho \in \mathcal{P}(\Omega)}
	\TV(\rho) 
    +
    \frac{1}{2\tau}W^2_2(\rho_0, \rho), 
\end{equation}
where $\tau$ is interpreted as a time discretization parameter for an implicit Euler scheme, as we shall see below. 

The {\em total variation functional} of a Radon measure $\rho \in \mathcal{M}(\Omega)$ is defined as
\begin{equation}\label{TV}
	\tag{TV}
	\TV(\rho) = \sup 
	\left\{
	\int_{\Omega} \ddiv z \dd \rho : z \in C^1_{c}\left(\Omega; \mathbb{R}^N\right), \norm{z}_{\infty} \le 1 \right\}, 
\end{equation}
which is not to be mistaken in this paper with the {\em total variation measure} $|\mu|$ of a Radon measure $\mu$ or its {\em total variation norm} $|\mu|(\Omega)$. We call $\text{BV}(\Omega)$ the subspace of functions $u \in L^1(\Omega)$ whose weak derivative $Du$ is a {\em finite Radon measure}. It can also be characterized as the $L^1$ functions such that $\TV(u)<\infty$, where $\TV(u)$ should be understood as in~\eqref{TV} with the measure $u\mathcal{L}^d \mres \Omega$, and it holds that $\TV(u) = |Du|(\Omega)$. As $\text{BV}(\mathbb{R}^d) \xhookrightarrow{} L^{\frac{d}{d-1}}(\mathbb{R}^d)$, solutions to~\eqref{TV-W} are also absolutely continuous w.r.t.~the Lebesgue measure. Therefore, w.l.o.g.~we can minimize on $L^{\frac{d}{d-1}}(\Omega)$, which is a reflexive Banach space. In addition, a function $\rho$ will have finite energy only if $\rho \in \mathcal{P}(\Omega)$.   

The data term is given by the Wasserstein distance, defined through the value of the optimal transportation problem (see~\cite{santambrogio2015optimal})
\begin{equation}\label{wasserstein_distance}
    W_2^2(\mu, \nu) 
    \eqdef 
    \min_{\gamma \in \Pi(\mu, \nu)} 
    \int_{\Omega \times \Omega} |x - y|^2\dd \gamma 
    =
    \sup_{\substack{ 
        \varphi, \psi \in C_b(\Omega)\\ 
        \varphi \oplus \psi \le |x - y|^2
        }
    }
    \int_{\Omega}\varphi \dd \mu + \int_{\Omega} \psi \dd \nu,
\end{equation}
where the minimum is taken over all the probability measures on $\Omega\times \Omega$ whose marginals are $\mu$ and $\nu$. An optimal pair $(\varphi, \psi)$ for the dual problem is referred to as Kantorovitch potentials.

Using total variation as regularization was suggested in~\cite{rudin1992nonlinear} with a $L^2$ data term for the~{\em Rudin-Osher-Fatemi problem}
\begin{equation}\label{ROF}
	\tag{ROF}
	\inf_{u \in L^2(\Omega)}  
    \TV(u) 
    +
    \frac{1}{2\lambda}\norm{u - g}^2_{L^2(\Omega)}, 
\end{equation}
see~\cite{chambolle2010introduction} for an overview. Other data terms were considered to better model the oscillatory behavior of the noise~\cite{meyer2001oscillating,lieu2008image}. More recently Wasserstein energies have shown success in the imaging community~\cite{cuturi2018semidual}, the model~\eqref{TV-W} being used for image denoising in~\cite{benning2013primal,burger2012regularization}.

Existence and uniqueness of solutions for~\eqref{TV-W} follow from the direct method in the calculus of variations, and the strict convexity of $W_2^2(\rho_0,\cdot)$ whenever $\rho_0$ is absolutely continuous, see~\cite[Prop.~7.19]{santambrogio2015optimal}. However, it is not easy to compute the subdifferential of the sum, which makes the derivation of the Euler-Lagrange equations not trivial.

In~\cite{carlier2019total}, the authors studied the gradient flow scheme defined by the successive iterations of~\eqref{TV-W}, and following the seminal work~\cite{jordan1998variational} they showed that, in dimension 1 as the parameter $\tau \to 0$, the discrete scheme converges to the solution of a fourth order PDE.~They used an entropic regularization approach, followed by a $\Gamma$-convergence argument, to derive an Euler-Lagrange equation, which states that there exists a Kantorovitch potential $\psi_1$ coinciding with some $\ddiv z \in \partial \TV(\rho_1)$ in the set $\{\rho_1 > 0\}$. On $\{\rho_1 = 0\}$, these quantities are related through a bounded Lagrange multiplier $\beta$ associated with the nonnegativity constraint $\rho_1 \ge 0$. 

In this work we propose an alternative way to derive the Euler-Lagrange equations which relies on the well established properties of solutions of~\eqref{ROF} and shows further regularity of the quantities $\ddiv z, \beta$.
\begin{theorem}\label{theorem.Euler-Lagrange_TVJKO}
	For any $\rho_0 \in L^1(\Omega)\cap \mathcal{P}(\Omega)$, let $\rho_1$ be the unique minimizer of~\eqref{TV-W}. The following hold.
	\begin{enumerate}
		\item There is a vector field $z \in H^1_0(\ddiv; \Omega)\cap L^{\infty}(\Omega; \mathbb{R}^d)$ and a Lagrange multiplier $\beta \ge 0$ such that 
		\begin{equation}
			\label{Euler_Lagrange.TV-JKO}
			\tag{TVW-EL}
			\left\{
			\begin{array}{rl}
				\ddiv z + \frac{\psi_1}{\tau} = \beta,& \text{ a.e. in $\Omega$}\\
				z\cdot\nu = 0,& \text{ on $\partial\Omega$}\\
				\beta\rho_1 = 0, & \text{ a.e. in $\Omega$}\\ 
				z\cdot D\rho_1 = |D\rho_1|,& \norm{z}_\infty \le 1,
			\end{array}
			\right.
			\end{equation}
			where $\psi_1$ is a Kantorovitch potential associated with $\rho_1$. 
			\item The Lagrange multiplier $\beta$ is the unique solution to~\eqref{ROF} with $\lambda = 1$ and $g = \psi_1/\tau$.
		\item The functions $\ddiv z, \psi_1$ and $\beta$ are Lipschitz continuous. 
	\end{enumerate} 
\end{theorem}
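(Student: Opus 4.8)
The plan is to remove the nonlinearity of the Wasserstein term and then to import the whole optimality theory of~\eqref{ROF}. First I would fix an optimal Kantorovich potential $\psi_1$ for the pair $(\rho_0,\rho_1)$ and apply the standard first–variation (envelope) argument: since $W_2^2(\rho_0,\cdot)$ is the supremum of the affine functionals $\rho\mapsto\int\varphi_1\,\dd\rho_0+\int\psi_1\,\dd\rho$ and this supremum is attained at $\rho_1$, the minimiser $\rho_1$ of~\eqref{TV-W} must also minimise the linearised energy
\begin{equation*}
	\TV(\rho)+\frac{1}{\tau}\int_\Omega\psi_1\,\dd\rho\qquad\text{over }\rho\in\mathcal{P}(\Omega),
\end{equation*}
where $\psi_1$ is normalised so that the first variation of the Wasserstein term at $\rho_1$ equals $\psi_1/\tau$ and the remaining additive constant is absorbed into the multiplier of the mass constraint $\int\rho=1$. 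Because $\Omega$ is bounded and convex, $\psi_1$ is Lipschitz, hence $\psi_1/\tau\in L^\infty(\Omega)\subset L^2(\Omega)$; this integrability is exactly what makes the~\eqref{ROF} machinery applicable.

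Next I would \emph{define} $\beta$ as the unique solution of~\eqref{ROF} with $\lambda=1$ and $g=\psi_1/\tau$, shifting $\psi_1$ by the constant dictated by the mass constraint so that $\inf_\Omega\beta=0$, and thus $\beta\ge0$; this is already statement~(2). The optimality condition for~\eqref{ROF} reads $\psi_1/\tau-\beta\in\partial\TV(\beta)$, which, by the characterisation of $\partial\TV$, produces a field $z$ with $\norm{z}_\infty\le1$, null normal trace $z\cdot\nu=0$ on $\partial\Omega$, and
\begin{equation*}
	\ddiv z+\frac{\psi_1}{\tau}=\beta\quad\text{a.e. in }\Omega,\qquad z\cdot D\beta=|D\beta|.
\end{equation*}
Since $\ddiv z=\beta-\psi_1/\tau\in L^2(\Omega)$, this yields $z$ in the space of the statement, together with the first two lines of~\eqref{Euler_Lagrange.TV-JKO}.

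The heart of the argument — and the step I expect to be the main obstacle — is to show that this very pair $(\beta,z)$ certifies optimality of $\rho_1$ in the linearised problem, so as to recover $\beta\rho_1=0$ and the \emph{calibration of $\rho_1$}, $z\cdot D\rho_1=|D\rho_1|$. For any feasible $\rho$, using $z$ as a test field in~\eqref{TV} and then $\ddiv z+\psi_1/\tau=\beta$ gives
\begin{equation*}
	\TV(\rho)+\frac{1}{\tau}\int_\Omega\psi_1\,\dd\rho\ \ge\ \int_\Omega\Bigl(\ddiv z+\frac{\psi_1}{\tau}\Bigr)\rho=\int_\Omega\beta\,\rho\ \ge\ 0 ,
\end{equation*}
with equality in the first inequality precisely when $z$ saturates the supremum defining $\TV(\rho)$ (the calibration condition) and in the second precisely when $\beta\rho=0$. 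It therefore suffices to prove that the optimal value equals $0$: evaluating at the known minimiser $\rho_1$ then forces both inequalities to be equalities, which yields simultaneously $\int_\Omega\beta\rho_1=0$, hence $\beta\rho_1=0$ a.e. since $\beta,\rho_1\ge0$, and the calibration $z\cdot D\rho_1=|D\rho_1|$. Equivalently, one must establish strong duality between this obstacle-type linear problem and~\eqref{ROF}, i.e. that the constant fixed by the mass constraint makes $\inf_\Omega\beta=0$ and the optimal value vanish at once. I would prove this either by computing the Fenchel–Rockafellar dual of the linearised problem and recognising it as~\eqref{ROF} (the delicate point being the constraint qualification and the absence of a duality gap in the nonreflexive $\mathrm{BV}$–measure setting, to be handled via the $L^\infty$ bound on $\psi_1$ and the continuity of $\TV$), or more geometrically through the coarea/layer-cake decomposition of~\eqref{ROF}, using that $\{\beta=0\}\supseteq\{\rho_1>0\}$ and that the super-level sets of $\beta$ solve prescribed-curvature problems calibrated by the same $z$.

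Finally, statement~(3) follows from regularity theory once~(2) is available. The potential $\psi_1$ is Lipschitz as a Kantorovich potential for the quadratic cost on the bounded convex set $\Omega$. The solution operator of~\eqref{ROF} does not increase the Lipschitz seminorm of its datum — a consequence of its comparison principle and translation covariance — so $\beta$, being the~\eqref{ROF} solution with Lipschitz datum $g=\psi_1/\tau$, is itself Lipschitz; and then $\ddiv z=\beta-\psi_1/\tau$ is Lipschitz as a difference of Lipschitz functions. This gives the Lipschitz continuity of $\psi_1$, $\beta$ and $\ddiv z$, and completes the proof.
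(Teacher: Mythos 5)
Your overall plan (replace the Wasserstein term by its linearisation at $\rho_1$, then run the~\eqref{ROF} machinery) matches the paper's architecture, but two of your steps do not hold up, and the first is a genuine logical error. The ``envelope argument'' goes the wrong way: writing $W_2^2(\rho_0,\cdot)$ as a supremum of affine functionals, the optimal pair $(\varphi_1,\psi_1)$ at $\rho_1$ furnishes a \emph{minorant} of $W_2^2(\rho_0,\cdot)$ touching at $\rho_1$, and minimality of $\rho_1$ for the larger functional $\TV+\tfrac{1}{2\tau}W_2^2(\rho_0,\cdot)$ implies nothing about minimality for the smaller, linearised one. (Compare: $0$ minimises $|x|+x^2/2$, and $x\mapsto x$ is an affine minorant of $|x|$ active at $0$, yet $0$ does not minimise $x+x^2/2$.) The conclusion is in fact \emph{false} for an arbitrary optimal potential in the paper's sense: one may lower $\psi_1$ away from $\supp\rho_1$ while keeping the pair admissible and optimal, and the linearised problem with the modified potential can even become unbounded below. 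This is precisely the sum-rule difficulty the paper flags; it is resolved in Lemma~\ref{lemma.almost_euler_lagrange} by testing optimality along $\rho_t=\rho+t(\rho_1-\rho)$ with potentials $\psi_t$ \emph{associated to $\rho_t$} (so the supremum is attained at the right point for each $t$), and then sending $t\to 1$ via the uniform Lipschitz bound and Arzel\`a--Ascoli; the $\psi_1$ of the theorem is that limit, not an arbitrary potential. A Danskin-type or minimax argument (or uniqueness of potentials when $\supp\rho_0$ is connected) could repair your step, but as written it is a non sequitur.

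The second gap is that the step you yourself call the heart of the argument --- that the pair $(\beta,z)$ built from~\eqref{ROF} with datum $\psi_1/\tau$ certifies optimality of $\rho_1$ in the linearised problem, yielding $\int_\Omega\beta\rho_1\,\dd x=0$ and $z\cdot D\rho_1=|D\rho_1|$ --- is never proved: you only name two candidate strategies (Fenchel--Rockafellar duality, coarea) without executing either. The difficulty is concrete: you fix one additive constant so that the linearised optimal value would vanish, and another so that $\inf_\Omega\beta=0$; the assertion that these constants coincide \emph{is} the strong duality you defer, so your conditional chain of inequalities is circular until it is supplied. The paper's penalisation is exactly the missing device: solve~\eqref{ROF} with $g=\rho_1-t\psi_1$, compare the energies of $u_t^+$ and $\rho_1$ to conclude $u_t^+=\rho_1$, and set $\beta_t=u_t^-/t$; then $\beta_t\ge 0$ and $\beta_t\rho_1=\beta_t u_t^+=0$ are automatic, and the inclusions $\partial_{L^2}\TV(u_t)\subset\partial_{L^2}\TV(u_t^\pm)$ of Lemma~\ref{lemma.subdifferential_h_lemma} give both $\beta_t-\psi_1\in\partial_{L^2}\TV(\rho_1)$ (hence the calibration $z\cdot D\rho_1=|D\rho_1|$) and the fact that $\beta_t$ solves~\eqref{ROF} with datum $\psi_1$, $\lambda=1$ (hence $\beta_t\equiv\beta$, which is your statement (2)), with all normalisations consistent by construction. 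Note also that this penalisation requires $\rho_1\in L^2(\Omega)$, which is why the paper first assumes $\rho_0\in L^\infty(\Omega)$ and then handles general $\rho_0\in L^1(\Omega)$ by truncation and a compactness/stability passage to the limit --- a case your proposal does not address at all. Your regularity step (3) is correct and agrees with the paper's use of the Lipschitz-preservation property of the~\eqref{ROF} solution map.
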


\section{The Euler-Lagrange equation}
Let $X$ and $X^\star$ be duality-paired spaces and $f:X\to \mathbb{R}\cup \{\infty\}$ be a convex function, the subdifferential of $f$ on $X$ is given by
\begin{equation}
	\partial_X f(u)
	\eqdef 
	\left\{
		p \in X^\star : 
		f(v) \ge f(u) + \inner{p, v - u}, \text{ for all } v \in X
	\right\}.
\end{equation} 
In order to derive optimality conditions for~\eqref{TV-W} we will need some properties of the subdifferential of $\TV$ and of~\eqref{ROF}.

\begin{proposition}{\cite{bredies2016pointwise,chambolle2010introduction,mercier2018continuity}}\label{proposition.ROF_TV_properties}
	If $u \in \BV(\Omega)$, then the subdifferential of $\TV$ at $u$ assumes the form
	\[
		\partial_{L^2}\TV(u)
		= 	
		\left\{
			p \in L^2(\Omega): 
			\begin{array}{c}
				p = -\ddiv z, \ z \in H^1_0(\ddiv; \Omega),\\ 
				\norm{z}_\infty \le 1, \ |Du| = z\cdot Du
			\end{array}
		\right\}.
	\]

	If $p \in \partial_{L^2} \TV(u)$, then 
	\begin{equation*}
		p \in \partial_{L^2} \TV(u^+),\quad
		-p \in \partial_{L^2} \TV(u^-).
	\end{equation*}

	If in addition $u$ solves~\eqref{ROF}, then 
	\begin{enumerate}
		\item $u^+$ solves~\eqref{ROF} with the constraint $u \ge 0$;
		\item it holds that
		\begin{equation}\label{EL_ROF}
			0 \in \frac{u-g}{\lambda}+ \partial_{L^2}\TV(u),
		\end{equation}
		and conversely, if $u$ satisfies~\eqref{EL_ROF}, $u$ minimizes~\eqref{ROF};
		\item  for $\Omega$ convex, if $g$ is uniformly continuous with modulus of continuity $\omega$, then $u$ has the same modulus of continuity.
	\end{enumerate}
\end{proposition}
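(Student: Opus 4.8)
The plan is to establish the assertions in order, using throughout that $\TV$ is convex, lower semicontinuous and positively one-homogeneous on $L^2(\Omega)$. For the subdifferential formula, I would note that such a functional is the support function $\sigma_K=\sup_{p\in K}\inner{p,\cdot}$ of a closed convex set, here $K$ the $L^2$-closure of $\{-\ddiv z : z\in C^1_c(\Omega;\mathbb{R}^d),\ \norm{z}_\infty\le1\}$, and that for a support function $\partial_{L^2}\TV(u)=\{p\in K : \inner{p,u}=\TV(u)\}$ (membership being equivalent to saturation of the Fenchel inequality, since $\TV^\ast=\iota_K$). It then remains to identify $K$ with $\{-\ddiv z : z\in H^1_0(\ddiv;\Omega),\ \norm{z}_\infty\le1\}$ via a density argument --- the step where the space $H^1_0(\ddiv;\Omega)$, encoding $z\cdot\nu=0$ on $\partial\Omega$, appears --- and to integrate by parts: with no boundary term (precisely because $z\in H^1_0(\ddiv;\Omega)$) one has $\inner{-\ddiv z,u}=\int_\Omega z\cdot\dd Du$, so the saturation $\inner{p,u}=|Du|(\Omega)$ becomes $\int_\Omega z\cdot\dd Du=|Du|(\Omega)$; with $\norm{z}_\infty\le1$ and the pointwise bound $z\cdot Du\le|Du|$ this forces the measure identity $z\cdot Du=|Du|$.

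For the splitting into positive and negative parts I would use the two identities $Du=Du^+-Du^-$ (immediate from $u=u^+-u^-$) and $|Du|=|Du^+|+|Du^-|$, the latter obtained by localizing the coarea formula $\TV(u)=\int_{\mathbb{R}}\Per(\{u>t\})\,\dd t$ across the levels $t\ge0$ and $t<0$. Given $p=-\ddiv z\in\partial_{L^2}\TV(u)$, the identity of the first step yields $|Du^+|+|Du^-|=|Du|=z\cdot Du=z\cdot Du^+-z\cdot Du^-\le|Du^+|+|Du^-|$, where the inequality uses $z\cdot Du^+\le|Du^+|$ and $-z\cdot Du^-\le|Du^-|$. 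Equality then forces $z\cdot Du^+=|Du^+|$ and $(-z)\cdot Du^-=|Du^-|$; since $-\ddiv z=p$ and $-\ddiv(-z)=-p$, the formula of the first step gives $p\in\partial_{L^2}\TV(u^+)$ and $-p\in\partial_{L^2}\TV(u^-)$.

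The two ROF statements then follow by convex analysis. Assertion (2) is the Moreau--Rockafellar sum rule: the data term $v\mapsto\frac{1}{2\lambda}\norm{v-g}^2_{L^2}$ is convex, finite and continuous on all of $L^2(\Omega)$, so $\partial_{L^2}(\TV+\tfrac{1}{2\lambda}\norm{\cdot-g}^2)=\partial_{L^2}\TV+\tfrac{\cdot-g}{\lambda}$, and convexity makes the stationarity condition both necessary and sufficient. For assertion (1), starting from $\tfrac{g-u}{\lambda}\in\partial_{L^2}\TV(u)$, the previous paragraph upgrades it to $\tfrac{g-u}{\lambda}\in\partial_{L^2}\TV(u^+)$, whence $\tfrac{u^-}{\lambda}=\tfrac{u^+-g}{\lambda}+\tfrac{g-u}{\lambda}\in\tfrac{u^+-g}{\lambda}+\partial_{L^2}\TV(u^+)$. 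Because $u^-\ge0$ and $u^+u^-=0$, the function $-\tfrac{u^-}{\lambda}$ lies in the normal cone of $\{v\ge0\}$ at $u^+$, so $0\in\tfrac{u^+-g}{\lambda}+\partial_{L^2}\TV(u^+)+N_{\{v\ge0\}}(u^+)$, which is exactly the optimality condition of the nonnegativity-constrained problem solved by $u^+$.

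The remaining assertion (3), the modulus of continuity, is the step I expect to be the main obstacle. Here I would invoke the $L^\infty$ comparison principle for ROF --- the resolvent $g\mapsto u$ of $\partial_{L^2}\TV$ is order-preserving and nonexpansive in $L^\infty$ --- together with the translation invariance of $\TV$: for a shift $h$ one compares $u$ with the minimizer associated to the translated data and bounds $|u(x+h)-u(x)|$ by $\sup|g(\cdot+h)-g(\cdot)|\le\omega(|h|)$. The delicate point is to make this translation--comparison argument rigorous on a bounded domain, and it is the convexity of $\Omega$ that lets the two problems be compared on the overlap $\Omega\cap(\Omega-h)$ without spurious boundary contributions; I would follow the comparison route of~\cite{mercier2018continuity}.
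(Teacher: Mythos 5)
Your proposal is correct and shares the overall architecture of the paper's proof (Appendix~A): the support-function/one-homogeneity characterization of $\partial_{L^2}\TV$ is exactly Lemma~\ref{lemma.subdifferential_one_homo_functional} plus the closure identification cited from \cite{bredies2016pointwise} (which the paper also does not reprove in detail), items (1)--(2) are the same convex-analysis observations, and for item (3) both you and the paper simply defer to \cite[Theo.~3.1]{mercier2018continuity} --- your translation--comparison sketch is indeed that paper's strategy. The one genuinely different mechanism is your proof of the positive/negative-part lemma. The paper (Lemma~\ref{lemma.subdifferential_h_lemma}) stays at the level of the scalar saturation identity $\inner{p,u}=\TV(u)$: it slices both sides with the coarea formula, notes that the level-wise quantities $\Per(\{u^+>s\})-\int_{\{u^+>s\}}p\,\dd x$ and $-\Per(\{u^->s\})-\int_{\{u^->s\}}p\,\dd x$ have constant and opposite signs, deduces level-wise equality for a.e.~$s$, and reintegrates. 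You instead argue at the level of measures, combining $|Du|=|Du^+|+|Du^-|$ (also coarea-based) with additivity of the pairing, $z\cdot Du=z\cdot Du^+-z\cdot Du^-$, and the measure bound $z\cdot Dv\le|Dv|$, then forcing equality term by term. Your route is slicker but leans on the Anzellotti pairing calculus (bilinearity in $u$, the measure inequality, and the Gauss--Green formula hidden in your integration by parts $\inner{-\ddiv z,u}=\int_\Omega z\cdot\dd Du$, which is not a naive pointwise identity since $z$ is only defined Lebesgue-a.e.); this is precisely the machinery of \cite{bredies2016pointwise} that also underlies the equivalence between $\inner{p,u}=\TV(u)$ and the condition $z\cdot Du=|Du|$ appearing in the statement, so it is legitimate, but the paper's slicing argument is the more elementary of the two, needing only indicator test functions in $\partial J(0)$. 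For item (1), your normal-cone phrasing $-u^-/\lambda\in N_{\{v\ge0\}}(u^+)$, together with the always-valid inclusion $\partial_{L^2}\TV(u^+)+N_{\{v\ge0\}}(u^+)\subset\partial_{L^2}\bigl(\TV+\chi_{\{v\ge0\}}\bigr)(u^+)$ (so no constraint qualification is needed for sufficiency), is the paper's inequality $\lambda\bigl(\TV(v)-\TV(u^+)-\inner{p,v-u^+}\bigr)\ge0\ge-\inner{u^-,v}$ in disguise; item (2) is the same Fermat/Moreau--Rockafellar observation in both, using continuity of the quadratic term.
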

In the previous proposition, we recall that $H^1_0(\ddiv; \Omega)$ denotes the closure of $C^\infty_c(\Omega; \mathbb{R}^d)$ with respect to the norm $\norm{z}_{H^1(\ddiv)}^2 = \norm{z}_{L^2(\Omega)}^2 + \norm{\ddiv z}_{L^2(\Omega)}^2$. These properties are known in the literature of~\eqref{ROF}, but for the reader's convenience they are proven in the appendix~\ref{appendix.ROF}.

Unless otherwise stated, we consider in the sequel $X=L^{\frac{d}{d-1}(\Omega)}$, $X^\star = L^d(\Omega)$ and we drop the index $X$ in the notation $\partial_X$. 
Under certain regularity conditions, one can see the Kantorovitch potentials as the first variation of the Wasserstein distance,~\cite{santambrogio2015optimal}. As a consequence, Fermat's rule $0 \in \partial\left(W_2^2(\rho_0, \cdot) + \TV(\cdot)\right)(\rho_1)$ assumes the following form. 
\begin{lemma}\label{lemma.almost_euler_lagrange}
	Let $\rho_1$ be the unique minimizer of~\eqref{TV-W}, then there exists a Kantorovitch potential $\psi_1$ associated to $\rho_1$ such that 
	\begin{equation}
	\label{subdifferential_almost_inclusion}
		-\frac{\psi_1}{\tau} \in \partial \left(\TV + \chi_{\mathcal{P}\left(\Omega\right)} \right)(\rho_1). 
	\end{equation}
\end{lemma}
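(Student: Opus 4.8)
The plan is to apply Fermat's rule to the convex functional $F(\rho) = \TV(\rho) + \chi_{\mathcal{P}(\Omega)}(\rho) + \frac{1}{2\tau}W_2^2(\rho_0,\rho)$ on the reflexive space $X = L^{\frac{d}{d-1}}(\Omega)$. Since $\rho_1$ is the unique minimizer, we have $0 \in \partial F(\rho_1)$. The entire content of the lemma is to turn this abstract inclusion into the explicit statement that $-\psi_1/\tau$ belongs to $\partial(\TV + \chi_{\mathcal{P}(\Omega)})(\rho_1)$, where $\psi_1$ is a genuine Kantorovitch potential. Thus the strategy splits into two parts: first, identify the subdifferential of the data term $\rho \mapsto \frac{1}{2\tau}W_2^2(\rho_0,\rho)$ with the (rescaled) Kantorovitch potential; second, justify the additivity of the subdifferentials so that $0 \in \partial F(\rho_1)$ decomposes as a sum.

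For the first part, I would invoke the standard first-variation computation for the Wasserstein distance (as recalled in the text, following Santambrogio): for an absolutely continuous $\rho_0$, the functional $\rho \mapsto \frac{1}{2}W_2^2(\rho_0,\rho)$ is convex in $\rho$, and its first variation at $\rho_1$ is given by a Kantorovitch potential $\psi_1$ associated with the pair $(\rho_0,\rho_1)$, i.e. $\psi_1 \in \partial\big(\tfrac{1}{2}W_2^2(\rho_0,\cdot)\big)(\rho_1)$. Dividing by $\tau$ yields $\psi_1/\tau$ as a subgradient of the data term. The key point to verify here is that $\psi_1$ lies in the dual space $X^\star = L^d(\Omega)$ (or at least pairs well against perturbations in $L^{\frac{d}{d-1}}$); since $\Omega$ is bounded and Kantorovitch potentials for the quadratic cost are Lipschitz on a bounded domain, $\psi_1 \in L^\infty(\Omega) \subset L^d(\Omega)$, so the duality pairing is well-defined.

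For the second part, I would establish the sum rule $\partial F(\rho_1) = \partial\big(\tfrac{1}{2\tau}W_2^2(\rho_0,\cdot)\big)(\rho_1) + \partial(\TV + \chi_{\mathcal{P}(\Omega)})(\rho_1)$. The Moreau--Rockafellar theorem gives this decomposition provided a constraint qualification holds, e.g. that one of the functionals is continuous at a point in the effective domain of the other. Here the data term $\frac{1}{2\tau}W_2^2(\rho_0,\cdot)$ is finite and continuous on all of $\mathcal{P}(\Omega)$ for the relevant topology, which should supply the qualification against $\TV + \chi_{\mathcal{P}(\Omega)}$. Combining the two parts, $0 \in \partial F(\rho_1)$ forces $-\psi_1/\tau \in \partial(\TV + \chi_{\mathcal{P}(\Omega)})(\rho_1)$, which is exactly \eqref{subdifferential_almost_inclusion}.

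I expect the main obstacle to be the rigorous justification of the two analytic ingredients that are easy to state but delicate to prove: namely that the Kantorovitch potential genuinely realizes the subdifferential of the Wasserstein term \emph{as an element of the chosen dual space} $L^d(\Omega)$, and that the sum rule applies in this infinite-dimensional convex-analytic setting despite the hard nonnegativity and mass constraints encoded in $\chi_{\mathcal{P}(\Omega)}$. The convexity of $W_2^2(\rho_0,\cdot)$ in the linear (displacement-independent) sense and the precise regularity/integrability of $\psi_1$ are the technical crux; once these are in place the reduction of Fermat's rule to \eqref{subdifferential_almost_inclusion} is essentially bookkeeping.
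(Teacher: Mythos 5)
Your first ingredient is fine, and in fact easier than you suggest: if $(\varphi_1,\psi_1)$ is an optimal dual pair for $W_2^2(\rho_0,\rho_1)$, then that same pair is admissible in the dual problem for any other $\rho\in\mathcal{P}(\Omega)$, and subtracting the two duality relations gives in one line that (a suitable multiple of) $\psi_1$ is a subgradient at $\rho_1$ of the Wasserstein term \emph{restricted to} $\mathcal{P}(\Omega)$; no integrability issues arise since $\Omega$ is bounded and $\psi_1$ is Lipschitz. The genuine gap is your second ingredient, the sum rule. The Moreau--Rockafellar theorem requires one of the two convex functionals to be finite and \emph{norm}-continuous at some point of the domain of the other, and in $X=L^{\frac{d}{d-1}}(\Omega)$ this fails for every functional in sight: the domains of both $\frac{1}{2\tau}W_2^2(\rho_0,\cdot)$ (which is $+\infty$ off $\mathcal{P}(\Omega)$) and $\TV+\chi_{\mathcal{P}(\Omega)}$ lie inside the affine hyperplane $\{\int_\Omega\rho=1\}$ intersected with the cone $\{\rho\ge 0\}$, a set with empty norm-interior, so neither functional is norm-continuous at any point; and $\TV$ without the constraint is also nowhere $L^p$-continuous, since every $L^p$-neighborhood of a BV function contains non-BV functions. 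Continuity of $W_2^2(\rho_0,\cdot)$ ``on $\mathcal{P}(\Omega)$ for the relevant topology'' is continuity relative to the constraint set, which is not a constraint qualification and cannot substitute for one. Alternative qualifications fare no better: the Attouch--Brezis condition would require the cone generated by the difference of the two domains to be a closed subspace, but writing a generic mean-zero $u\in L^{\frac{d}{d-1}}(\Omega)$ as $\lambda(\rho-\rho')$ with $\rho'\in\BV(\Omega)\cap\mathcal{P}(\Omega)$ forces $\rho'$ to dominate $u^-/\lambda$, and a general $L^{\frac{d}{d-1}}$ function admits no BV majorant. This is precisely the difficulty the paper itself flags when it says it ``can be difficult to verify the conditions for direct sum between subdifferentials''; your proposal leaves exactly that step unproved.

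The paper avoids any sum rule by a direct perturbation argument, and this is the idea your proposal is missing. For a competitor $\rho\in\BV(\Omega)\cap\mathcal{P}(\Omega)$ one sets $\rho_t=\rho_1+(1-t)(\rho-\rho_1)$ and uses the Kantorovitch potentials $(\varphi_t,\psi_t)$ of the \emph{perturbed} measure $\rho_t$: they are optimal for $\rho_t$ but merely admissible for $\rho_1$. Chaining the optimality of $\rho_1$, the duality identity at $\rho_t$, the convexity of $\TV$, and the admissibility inequality $\int\varphi_t\,\dd\rho_0+\int\psi_t\,\dd\rho_1\le\frac12 W_2^2(\rho_0,\rho_1)$ yields
\begin{equation*}
\TV(\rho)\;\ge\;\TV(\rho_1)-\int_\Omega\psi_t\,\dd(\rho-\rho_1)\qquad\text{for all }t\in(0,1),
\end{equation*}
with both terms pointing the right way simultaneously because the potentials belong to $\rho_t$, not to $\rho_1$. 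One then passes to the limit $t\to 1$ using the uniform Lipschitz bounds on the potentials (Arzel\`a--Ascoli together with the stability of optimal transport), which replaces $\psi_t$ by a genuine Kantorovitch potential $\psi_1$ of $\rho_1$, independent of the competitor $\rho$. To repair your argument you would either need to reproduce a perturbation argument of this kind or prove a bespoke constraint qualification; the off-the-shelf convex-analytic machinery you invoke does not apply in this setting.
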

\begin{proof}
	For simplicity, we assume $\tau = 1$. Take $\rho \in \BV(\Omega)\cap \mathcal{P}(\Omega)$ and define $\rho_t \eqdef \rho + t(\rho_1 - \rho)$. Since $\overline{\Omega}$ is compact, the $\sup$ in~\eqref{wasserstein_distance} admits a maximizer~\cite[Prop.~1.11]{santambrogio2015optimal}. Let $\varphi_t, \psi_t$ denote a pair of Kantorovitch potentials between $\rho_0$ and $\rho_t$. From the optimality of $\rho_1$ it follows
	\begin{align*} 
		&\frac{1}{2}W_2^2(\rho_0, \rho_1) 
		+ 
		\TV(\rho_1)
		\le 
		\int_\Omega \varphi_t \dd \rho_0 + \int_\Omega \psi_t \dd \rho_t + \TV(\rho_t)\\
		&\le
		\int_\Omega \varphi_t \dd \rho_0 + \int_\Omega \psi_t \dd \rho_1 + \TV(\rho_1)
		+ (1-t)\left(\int_\Omega \psi_t \dd(\rho - \rho_1)  + \TV(\rho) - \TV(\rho_1)\right)\\
		&\le 
		\frac{1}{2}W_2^2(\rho_0, \rho_1) 
		+ 
		\TV(\rho_1)
		+
		(1-t)\left(\int_\Omega \psi_t \dd(\rho - \rho_1)  + \TV(\rho) - \TV(\rho_1)\right).
	\end{align*}
	 Hence, $-\psi_t \in \partial \left(\TV + \chi_{\mathcal{P}\left(\Omega\right)} \right)(\rho_1)$ for all $t \in (0,1)$. Notice that as the optimal transport map from $\rho_0$ to $\rho_t$ is given by $T_t = \text{id} - \nabla \psi_t$ and assumes values in the bounded set $\Omega$, the family $\left(\psi_t\right)_{t \in [0,1]}$ is uniformly Lipschitz so that by Arzelà-Ascoli's Theorem $\psi_t$ converges uniformly to $\psi_1$ as $t$ goes to $1$ (see also~\cite[Thm.~1.52]{santambrogio2015optimal}). Therefore, $-\psi_1 \in \partial \left(\TV + \chi_{\mathcal{P}\left(\Omega\right)} \right)(\rho_1)$.
\end{proof}
With these results we can prove Theorem~\ref{theorem.Euler-Lagrange_TVJKO}. 

\begin{proof}[Proof of Theorem~\ref{theorem.Euler-Lagrange_TVJKO}]
	Here, to simplify, we still assume $\tau = 1$. The subdifferential inclusion~\eqref{subdifferential_almost_inclusion} is conceptually the Euler-Lagrange equation for~\eqref{TV-W}, however it can be difficult to verify the conditions for direct sum between subdifferentials and give a full characterization. Therefore, for some arbitrary $\rho \in \mathcal{M}_+(\Omega)$ and $t>0$, set
\begin{equation*}
	\rho_t = \frac{\rho_1 + t(\rho - \rho_1)}{1 + t\alpha }, \text{ where } \alpha = \int_{\Omega}\dd(\rho - \rho_1).
\end{equation*}
Now $\rho_t$ is admissible for the subdifferential inequality and using the positive homogeneity of $\TV$ we can write 
\begin{equation*}
	\TV(\rho_1) - \int_{\Omega}\psi_1\dd \left(\rho_t - \rho_1 \right) \le \frac{\TV(\rho_1) + t\left(\TV(\rho) - \TV(\rho_1)\right)}{1 + t\alpha}.
\end{equation*}
After a few computations we arrive at 
$
	\TV(\rho) \ge \TV(\rho_1) + \int_{\Omega}(C - \psi_1)\dd(\rho - \rho_1),
$
where $C =  \TV(\rho_1) + \int_{\Omega}\psi_1 \dd \rho_1$. Notice that $(\phi + C, \psi - C)$ remains an optimal potential. So we can replace $\psi_1$ by $\psi_1 - C$, and obtain that for all $\rho \ge 0$ the following holds 
\begin{equation}
	\label{psi_1_ineq_positivemeasures}
	\TV(\rho) \ge \TV(\rho_1) + \int_{\Omega} - \psi_1\dd(\rho - \rho_1), \text{ with } \TV(\rho_1) = \int_{\Omega} - \psi_1\dd\rho_1.
\end{equation}
In particular, this means $-\psi_1 \in \partial \left(\TV + \chi_{\mathcal{M}_+\left(\Omega\right)}\right)(\rho_1)$ and $\rho_1$ is optimal for
\begin{equation}
	\label{psi_problem}
	\inf_{\rho \ge 0} \mathcal{E}(\rho) := \TV(\rho) + \int_{\Omega}\psi_1(x)\rho(x)\dd x.
\end{equation}

This suggests a penalization with an $L^2$ term {\em e.g.}
\begin{equation}
	\label{psi_ROF_problem}
	\inf_{u \in L^2(\Omega)} \mathcal{E}_t(u) := \TV(u) + \int_{\Omega}\psi_1(x)u(x)\dd x + \frac{1}{2t}\int_{\Omega}|u -\rho_1|^2\dd x
\end{equation}
which is a variation of~\eqref{ROF} with $g = \rho_1 - t\psi_1$. In order for~\eqref{psi_ROF_problem} to make sense, we need $\rho_1\in L^2(\Omega)$, which is true if $\rho_0$ is $L^\infty$ since then~\cite[Thm. 4.2]{carlier2019total} implies $\rho_1\in L^\infty$. Suppose for now that $\rho_0$ is a bounded function.

Let $u_t$ denote the solution of~\eqref{psi_ROF_problem}, from Prop.~\ref*{proposition.ROF_TV_properties} if $u_t$ solves~\eqref{psi_ROF_problem}, then $u_t^+$ solves the same problem with the additional constraint that $u \ge 0$, see~\cite[Lemma~A.1]{chambolle2004algorithm}. As $\rho_1 \ge 0$ we can compare the energies of $u^+_t$ and $\rho_1$ and obtain the following inequalities
\begin{align*}
	\mathcal{E}(\rho_1) \le \mathcal{E}(u_t^+) \text{ and } 
	\mathcal{E}_t(u_t^+) \le \mathcal{E}_t(\rho_1).
\end{align*} 
Summing both inequalities yields
\begin{equation}
\int_{\Omega} |u_t^+ - \rho_1|^2\dd x \le 0, \text{ therefore } u_t^+ = \rho_1 \text{ a.e. on $\Omega$.}
\end{equation}
In particular, we also have that $u_t \le \rho_1$. But as $u_t$ solves a~\eqref{ROF} problem, the optimality conditions from Prop.~\ref{proposition.ROF_TV_properties} give
\begin{equation}\label{el_almost}
	\beta_t -\psi_1 \in \partial_{L^2} \TV(u_t),
	\text{ where } 
	\beta_t
	\eqdef 
	\frac{\rho_1 - u_t}{t} \ge 0.
\end{equation}
Notice from the characterization of $\partial_{L^2}\TV(\cdot)$ that $\partial_{L^2}\TV(u) \subset \partial_{L^2}\TV(u^+)$. Since $u_t^+ = \rho_1$, we have that 
\begin{equation}\label{el_full}
	\beta_t -\psi_1 \in \partial_{L^2} \TV(\rho_1),
\end{equation}
which proves~\eqref{Euler_Lagrange.TV-JKO}.

Now we move on to study the family $\left(\beta_t\right)_{t > 0}$. Since $\rho_1 = u_t^+$, by definition $\beta_t = u_t^-/t$ and using the fact that $\partial_{L^2}\TV(u) \subset \partial_{L^2}\TV(u^-)$ in conjunction with equation~\eqref{el_almost}, it holds that
\begin{equation}\label{eq:subdiff_beta}
	\psi_1 - \beta_t \in \partial_{L^2}\TV(\beta_t).
\end{equation}
But then, from Prop.~\ref{proposition.ROF_TV_properties}, $\beta_t$ solves~\eqref{ROF} with $g = \psi_1$ and $\lambda = 1$. As this problem has a unique solution, the family $\left\{\beta_t\right\}_{t>0} = \{\beta\}$ is a singleton. 

Since $\Omega$ is convex, and we know that the Kantorovitch potentials are Lipschitz continuous, cf.~\cite{santambrogio2015optimal}, so $\beta$, as a solution of~\eqref{ROF} with Lipschitz data $g = \psi_1$, is also Lipschitz continuous with the same constant, following~\cite[Theo.~3.1]{mercier2018continuity}. 

But from~\eqref{el_full} and the characterization of the subdifferential of $\TV$, there is a vector field $z$ such that $z\cdot D\rho_1 = |D\rho_1|$ such that
\[
	\beta - \psi_1 = \ddiv z,
\]
and as a consequence $\ddiv z$ is also Lipschitz continuous, with constant at most twice the constant of $\psi_1$.

In the general case of $\rho_0 \in L^1(\Omega)$, define $\rho_{0,N} \eqdef c_N(\rho_0 \wedge N)$ for $N \in \mathbb{N}$, where $c_N$ is a renormalizing constant. Then $\rho_{0,N} \in L^\infty(\Omega)$ and $\rho_{0,N} \xrightarrow[N \to \infty]{L^1} \rho_0$. Let $\rho_{1,N}$ denote the unique minimizer of~\eqref{TV-W} with data term $\rho_{0,N}$, we can assume that $\rho_{1,N}$ w-$\star$ converges to some $\tilde \rho$. Then for any $\rho \in \mathcal{P}(\Omega)$ we have 
	\[
		\TV(\rho_{1,N}) + \frac{1}{2\tau} W_2^2(\rho_{0,N}, \rho_{1,N}) \le
		\TV(\rho) +       \frac{1}{2\tau} W_2^2(\rho_{0,N}, \rho). 
	\]
	Passing to the limit on $N\to \infty$ we have that $\tilde \rho$ is a minimizer and from uniqueness it must hold that $\tilde \rho = \rho_1$.

	Hence, consider the functions $z_N, \psi_{1,N}, \beta_N$ that satisfy~\eqref{Euler_Lagrange.TV-JKO} for $\rho_{1,N}$. Up to a subsequence, we may assume that $z_N$ converges weakly-$\star$ to some $z \in L^\infty(\Omega;\mathbb{R}^d)$. Since $\psi_{1,N}$, $\beta_N$ and $\ddiv z_N$ are Lipschitz continuous with the same Lipschitz constant for all $N$, by Arzelà-Ascoli, we can assume that $\psi_{1,N}, \beta_N$ and $\ddiv z_N$ converge uniformly to Lipschitz functions $\psi_1, \beta, \ddiv z = \beta - \psi_1$. In addition, passing to the limit in~\eqref{eq:subdiff_beta}, we find that $\beta$ solves~\eqref{ROF} for $\lambda = 1$ and $g = \psi_1$.

	Since $\beta_N$ converges uniformly and $\rho_{1,N}$ converges w-$\star$ we have
	\[
		0 = \lim_{N \to \infty} \int_\Omega \beta_N \rho_{1,N} \dd x
		= \int_\Omega \beta \rho_{1} \dd x,
	\]
	and hence $\beta \rho_1 = 0$ a.e.~in $\Omega$ since both are nonnegative. In addition, $\psi_1$ is a Kantorovitch potential associated to $\rho_1$ from the stability of optimal transport (see~\cite[Thm. 1.52]{santambrogio2015optimal}).
	From the optimality of $\rho_{1,N}$ it holds that
	\[
		\TV(\rho_{1,N}) + \frac{1}{2\tau} W_2^2(\rho_{0,N}, \rho_{1,N}) \le
		\TV(\rho_1)     + \frac{1}{2\tau} W_2^2(\rho_{0,N}, \rho),
	\] 
	so that $\lim \TV(\rho_{1,N}) \le \TV(\rho_1)$. Changing the roles of $\rho_1$ and $\rho_{1,N}$ we get an equality. So it follows that
	\[
		\int_\Omega (\beta - \psi_1)\rho_1\dd x 
		= 
		\lim_{N \to \infty}
		\int_\Omega (\beta_N - \psi_{1,N})\rho_{1,N}\dd x
		=  
		\lim_{N \to \infty}
		\TV(\rho_{1,N})
		= 
		\TV(\rho_1),
	\]
	Since $\TV$ is 1-homogeneous we conclude that $\beta - \psi_1 \in \partial \TV(\rho_1)$.
\end{proof}

We say $E$ is a set of finite perimeter if the indicator function $\mathds{1}_E$ is a BV function, and we set $\Per(E) = \TV(\mathds{1}_E)$. As a byproduct of the previous proof we conclude that the level sets $\{\rho_1 > s\}$ are all solutions to the same prescribed curvature problem.
\begin{corollary}\label{corollary.rho_1_levelset_props}
	The following properties of the level sets of $\rho_1$ hold.
	\begin{enumerate}
		\item For $s > 0$ and $\psi_1$ in~\eqref{Euler_Lagrange.TV-JKO}
		\[
		\{\rho_1 > s\} \in \argmin_{E \subset \Omega} \Per(E; \Omega) + \frac{1}{\tau}\int_{E}\psi_1\dd x
		\]
		\item $ \partial \{\rho_1 > s\} \setminus \partial^* \{\rho_1 > s\}$ is a closed set of Hausdorff dimension at most $d - 8$, where $\partial^*$ denotes the reduced boundary of a set, see~\cite{ambrosio2000functions}. In addition, $\partial^* \{\rho_1 > s\}$ is locally the graph of a function of class $W^{2,q}$ for all $q < +\infty$.
	\end{enumerate}
\end{corollary}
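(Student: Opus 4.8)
The plan is to derive both statements as consequences of the Euler--Lagrange characterization obtained in Theorem~\ref{theorem.Euler-Lagrange_TVJKO}, using the classical layer-cake (coarea) structure of total variation together with regularity theory for prescribed-mean-curvature sets.

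\textbf{Part (1): the level sets solve a prescribed curvature problem.} First I would recall the coarea formula $\TV(\rho_1) = \int_0^{+\infty} \Per(\{\rho_1 > s\};\Omega)\,\dd s$ and the analogous decomposition $\int_\Omega \psi_1 \rho_1 \,\dd x = \int_0^{+\infty}\!\int_{\{\rho_1 > s\}} \psi_1 \,\dd x\,\dd s$, valid since $\rho_1 \ge 0$. The key observation is that, by Part (1) of Theorem~\ref{theorem.Euler-Lagrange_TVJKO}, $\rho_1$ is a minimizer of the functional $\mathcal{E}(\rho) = \TV(\rho) + \tfrac{1}{\tau}\int_\Omega \psi_1 \rho \,\dd x$ over $\rho \ge 0$, exactly as established in~\eqref{psi_problem} and~\eqref{psi_1_ineq_positivemeasures}. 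I would then invoke the standard principle for $1$-homogeneous energies of this type (``the minimizer is minimal layer by layer''): writing the energy through the coarea decomposition, one shows that for almost every threshold $s>0$ the set $E_s \eqdef \{\rho_1 > s\}$ must minimize $E \mapsto \Per(E;\Omega) + \tfrac{1}{\tau}\int_E \psi_1\,\dd x$. The argument is a competitor/truncation argument: if some level set $E_s$ could be strictly improved by a set $F$, one perturbs $\rho_1$ by modifying it only near that level (replacing the superlevel set $E_s$ by $F$ on an infinitesimal slab $[s,s+\dd s]$), which would strictly decrease $\mathcal{E}(\rho_1)$, contradicting optimality. Finally I would upgrade ``almost every $s$'' to ``every $s>0$'' using the monotonicity $E_{s'} \subset E_s$ for $s' > s$, approximation from below, and lower semicontinuity of the perimeter together with $L^1$-continuity of the volume term.

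\textbf{Part (2): regularity of the boundaries.} Once each $E_s$ is known to minimize $\Per(E;\Omega) + \tfrac{1}{\tau}\int_E \psi_1\,\dd x$, it is a quasi-minimizer of the perimeter (a set of \emph{prescribed mean curvature} with bounded forcing term $\psi_1/\tau \in L^\infty$, indeed Lipschitz). The regularity statement is then exactly the classical De Giorgi--type regularity theory for such sets (see~\cite{ambrosio2000functions}): the reduced boundary $\partial^* E_s$ is a $C^{1,\alpha}$ hypersurface, the singular set $\partial E_s \setminus \partial^* E_s$ is closed with Hausdorff dimension at most $d-8$, and---since the curvature is prescribed by the Lipschitz function $\psi_1/\tau$---elliptic Schauder/Sobolev estimates bootstrap the reduced boundary to be locally the graph of a $W^{2,q}$ function for every $q<+\infty$. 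I would simply cite this theory and verify its hypotheses (bounded measurable, in fact Lipschitz, forcing term; $\Omega$ convex hence the relevant local minimality holds in the interior).

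\textbf{Main obstacle.} The genuinely delicate step is Part (1): rigorously passing from ``$\rho_1$ minimizes $\mathcal{E}$'' to ``\emph{every} superlevel set minimizes the prescribed-curvature functional.'' The layer-cake argument gives the conclusion for almost every $s$ essentially for free, but one must be careful with the nonnegativity constraint (competitors must stay admissible), with the fact that modifying a single slab is a measure-zero perturbation that needs to be realized as an honest one-parameter family of admissible densities, and with promoting the statement to \emph{all} $s>0$ rather than almost all. I expect the cleanest route is to exploit the convexity and $1$-homogeneity of $\mathcal{E}$ directly---testing optimality against the competitor obtained by replacing $\rho_1$ with $\min(\rho_1,s) + s\,\mathds{1}_F$ on the relevant range---and then to use the semicontinuity/approximation argument described above to remove the ``almost every'' qualifier, since the map $s \mapsto \Per(E_s;\Omega)$ is controlled from both sides by the coarea formula.
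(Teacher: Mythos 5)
There is a genuine gap at the heart of your Part (1): you correctly reduce everything to ``layer-by-layer minimality,'' but neither mechanism you offer for that step works as described. The slab replacement fails for exactly the nestedness reason you yourself flag---a minimizer $F$ of the set functional need not be comparable with the level sets $E_t\eqdef\{\rho_1>t\}$, so modifying $\rho_1$ ``on a slab'' does not define a function, and there is no obvious admissible competitor realizing it. Your fallback competitor $v=\min(\rho_1,s)+s\,\mathds{1}_F$ is an honest function, but it does not close the argument either: taking $\tau=1$ and writing $\mathcal{F}(E)\eqdef\Per(E;\Omega)+\int_E\psi_1\dd x$, its superlevel sets are $\{v>t\}=E_t\cup F$ for $0<t<s$ and $\{v>t\}=E_{t-s}\cap F$ for $s\le t<2s$, so the layer-cake decomposition gives $\mathcal{E}(v)=\int_0^s\bigl[\mathcal{F}(E_t\cup F)+\mathcal{F}(E_t\cap F)\bigr]\dd t$; to compare this with $\int_0^s\bigl[\mathcal{F}(E_t)+\mathcal{F}(F)\bigr]\dd t$ you need the submodularity inequality $\Per(A\cup B)+\Per(A\cap B)\le\Per(A)+\Per(B)$, which you never invoke, and even granting it one still needs a further argument to extract minimality at (almost) every level $s$ from the resulting integral inequality.

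The repair is much simpler than either mechanism and uses only what you already cite. By \eqref{psi_1_ineq_positivemeasures} the potential is normalized so that $\mathcal{E}(\rho_1)=\TV(\rho_1)+\int_\Omega\psi_1\rho_1\dd x=0$. Since $\mathds{1}_E\ge 0$ is admissible in \eqref{psi_problem}, minimality of $\rho_1$ gives $\mathcal{F}(E)=\mathcal{E}(\mathds{1}_E)\ge \mathcal{E}(\rho_1)=0$ for every finite-perimeter $E\subset\Omega$, so $\inf_E\mathcal{F}=0$, attained by $E=\emptyset$. Coarea plus Fubini then give $\int_0^{\infty}\mathcal{F}(E_s)\dd s=\mathcal{E}(\rho_1)=0$, and a nonnegative integrand with vanishing integral is zero a.e., i.e.\ $\mathcal{F}(E_s)=0=\min_E\mathcal{F}$ for a.e.\ $s>0$; your monotonicity/lower-semicontinuity upgrade to all $s>0$ then goes through verbatim (take $s_n\downarrow s$ in the full-measure set, so $\mathds{1}_{E_{s_n}}\to\mathds{1}_{E_s}$ in $L^1$ and $0\le\mathcal{F}(E_s)\le\liminf_n\mathcal{F}(E_{s_n})=0$). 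This repaired argument is the primal mirror image of what the paper actually does: the paper never perturbs $\rho_1$, but instead uses the field $z$ from \eqref{Euler_Lagrange.TV-JKO} as a calibration---$\int_E\ddiv z\,\dd x\le\Per(E;\Omega)$ for every $E$, while $\ddiv z=-\psi_1$ on $E_s$ because $\beta\rho_1=0$---then forces $\Per(E_s;\Omega)=\int_{E_s}-\psi_1\dd x$ for a.e.\ $s$ by the same coarea/Fubini computation, and concludes via the subdifferential inequality tested with $\mathds{1}_E$. Finally, your Part (2) is acceptable as a different citation route: the paper quotes the level-set regularity theory of \eqref{ROF} (using that $\rho_1$ is the positive part of an \eqref{ROF} solution, from the proof of Theorem \ref{theorem.Euler-Lagrange_TVJKO}), whereas you invoke the prescribed-mean-curvature/$\Lambda$-minimizer regularity theory directly for the sets produced in Part (1); with $\psi_1$ Lipschitz both yield the $(d-8)$-dimensional bound on the singular set and the local $W^{2,q}$ graph regularity of the reduced boundary.
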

\begin{proof}
	For simplicity take $\tau = 1$. Inside the set $\{\rho_1 > s\}$, for $s > 0$, we have $-\psi_1 = \ddiv z$, so from the definition of the perimeter we have
	\[
		\int_{\{\rho_1 > s\}}-\psi_1\dd x = \int_{\{\rho_1 > s\}} \ddiv z \dd x \le \Per\left(\{\rho_1 > s\}\right).
	\]
	So using the fact that $\TV(\rho_1) =\int_\Omega-\psi_1\dd x$, the coarea formula and Fubini's Theorem give
	\begin{align*}
		\int_{0}^{+\infty}\text{Per}(\mathds{1}_{\{\rho_1 > s\}})\dd s = \int_{\Omega}-\psi_1\int_0^{\rho_1(x)}\dd s\dd x = \int_0^{+\infty}\int_{\{\rho_1 > s\}}-\psi_1 \dd x\dd s.
	\end{align*}
	Hence, $\text{Per}(\{\rho_1 > s\}) = \int_{\{\rho_1 > s\}}-\psi_1 \dd x$ for {\em a.e.} $s>0$. But as $\beta \psi_1 = 0$ a.e., we have $-\psi_1 = \ddiv z$ in $\{\rho_1>s\}$, so that $-\psi_1 \in \partial \TV(\mathds{1}_{\{\rho_1 > s\}})$ for {\em a.e.} $s>0$; and by a continuity argument, for all $s>0$. The subdifferential inequality with $\mathds{1}_E$ gives
	\begin{equation}\label{pb_sublevelsets_rho1}
		\{\rho_1 > s\} \in \argmin_{E \subset \Omega} \text{Per}(E) + \int_E \psi_1(x) \dd x.
	\end{equation}

	Item (2) follows directly from the properties of \eqref{ROF}, see~\cite{chambolle2010introduction}, since $\rho_1 = u^+$, where $u$ solves a problem~\eqref{ROF}. 
\end{proof}

\section{Numerical Experiments}\label{section.experiments}
We solve~\eqref{TV-W} for an image denoising application using a Douglas-Rachford algorithm~\cite{combettes2011proximal} with Halpern acceleration~\cite{contreras2022optimal}, see table~\ref{algorithm.douglas_rachford}. For this we need subroutines to compute the prox operators defined, for a given $\lambda > 0$, as
\begin{align}
	\prox{\lambda\TV}(\bar \rho)
	&\eqdef 
	\argmin_{\rho \in L^2(\Omega)}
	\TV(\rho) + \frac{1}{2\lambda} \norm{\rho - \bar \rho}_{L^2(\Omega)}^2,\\
	\prox{\lambda W_2^2}(\bar \rho)
	&\eqdef 
	\argmin_{\rho \in L^2(\Omega)}
	\frac{1}{2\tau}W_2^2(\rho_0, \rho) + \frac{1}{2\lambda} \norm{\rho - \bar \rho}_{L^2(\Omega)}^2.
\end{align}

We implemented the $\prox{}$ of $\TV$ with the algorithm from~\cite{Condat-dtv}, modified to account for Dirichlet boundary conditions. From~\cite[Theo.~2.4]{chambolle2021learn} it is consistent with the continuous total variation. The $\prox{}$ of $W_2^2$ is computed by expanding the $L^2$ data term as
\begin{align*}
	\prox{\lambda W_2^2}(\bar \rho)
	&=
	\argmin_{\rho \in L^2(\Omega)}
	\frac{1}{2\tau}W_2^2(\rho_0, \rho) + 
	\frac{1}{2\lambda}\int_\Omega \rho^2\dd x
	+
	\int_\Omega \rho\underbrace{\left(-\frac{\bar \rho}{\lambda}\right)}_{= V}\dd x
	+ 
	\underbrace{\frac{1}{2\lambda}\bar\rho^2\dd x}_{cst}\\
	&=
	\argmin_{\rho \in L^2(\Omega)}
	\frac{1}{2\tau}W_2^2(\rho_0, \rho) + 
	\frac{1}{2\lambda}\int_\Omega \rho^2\dd x
	+
	\int_\Omega \rho V\dd x,
\end{align*}
which is one step of the Wasserstein gradient flow of the porous medium equation $\partial_t \rho_t = \lambda^{-1}\Delta(\rho_t^2) +\ddiv \left(\rho_t \nabla V\right)$, where the potential is $V = -\bar \rho/\lambda$, see~\cite{santambrogio2015optimal,jacobs2021back}. To compute it we have used the back-n-forth algorithm from~\cite{jacobs2021back}.
\begin{algorithm}
	\caption{Halpern accelerated Douglas-Rachford algorithm}\label{algorithm.douglas_rachford}
	\begin{algorithmic}
	\State $\beta_0 \gets 0$
	\State $x_0 \gets $ Initial Image
	\While{$n \ge 0$}
		\State  $y_n \gets \prox{\lambda\TV}(x_n)$
		\State $\lambda_n \in [\varepsilon, 2 -\varepsilon]$
		\State  $z_n \gets x_n + \lambda_n \left(\prox{\lambda W_2^2}(2y_n - x_n) - y_n\right)$ 
		\State  $\beta_n \gets \frac{1}{2}\left(1 + \beta_{n-1}^2\right)$ \Comment{Optimal constants for Halpern acceleration from~\cite{contreras2022optimal}}
		\State $x_{n+1} \gets (1-\beta_n) x_0 + \beta_n z_n$ 
	\EndWhile
	\end{algorithmic}
\end{algorithm} 

\subsection{Evolution of balls}
Following~\cite{carlier2019total}, in dimension $1$, whenever the initial measure is uniformly distributed over a ball, the solutions remain balls. In $\mathbb{R}^d$, one can prove this remains true. If $\rho_0$ is uniformly distributed over a ball of radius $r_0$, then the solution to~\eqref{TV-W} is uniformly distributed in a ball of radius $r_1$ solving 
\[
	r_1^2(r_1 - r_0) = r_0^2(d+2)\tau,
\]
see appendix~\ref{appendix.Ndim} for a proof of this claim.
\begin{figure}
	\centering
	\includegraphics[scale=.15, trim = 0 0 0 0cm]{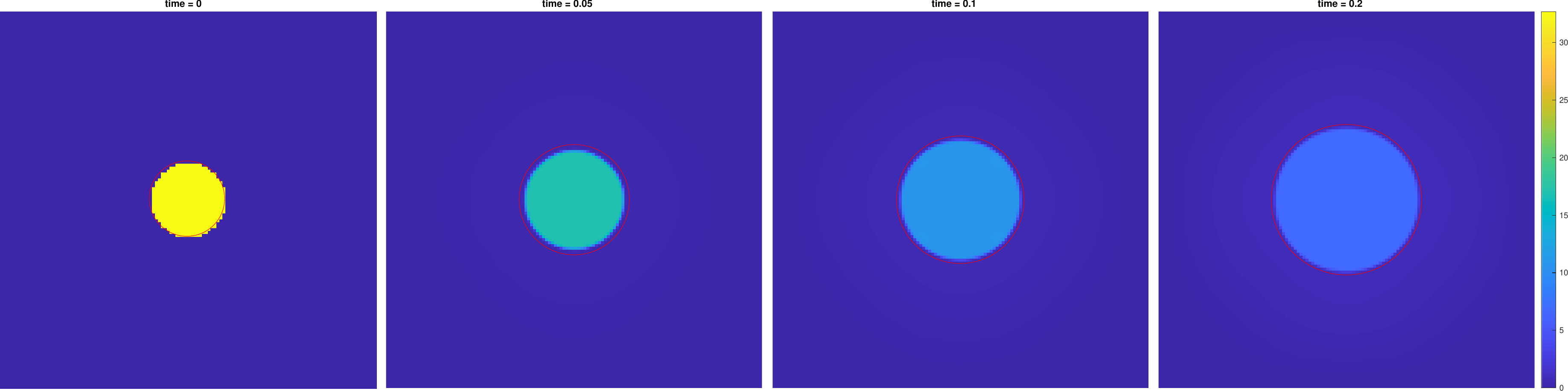}
	\caption{Evolution of circles: from left to right initial condition and solutions for $\tau = 0.05, 0.1,0.2$. The red circles correspond to the theoretical radius.}
	\label{fig:evolution_balls}
\end{figure}

\subsection{Reconstruction of dithered images}
In this experiment we use model~\eqref{TV-W} to reconstruct dithered images. In $\mathcal{P}(\mathbb{R}^2)$ the dithered image is a sum of Dirac masses, so the model~\eqref{TV-W} outputs a new image which is close in the Wasserstein topology, but with small total variation. In Figure~\ref{fig:dithering} below, we compared the result with the reconstruction given by~\eqref{ROF}, both with a parameter $\tau = 0.2$. Although the classical~\eqref{ROF} model was able to create complex textures, these remain granulated, whereas the~\eqref{TV-W} model is able to generate both smooth and complex textures. 
\begin{figure}
	\centering
\includegraphics[width=\linewidth,clip,trim= 0 0 0 0cm ]{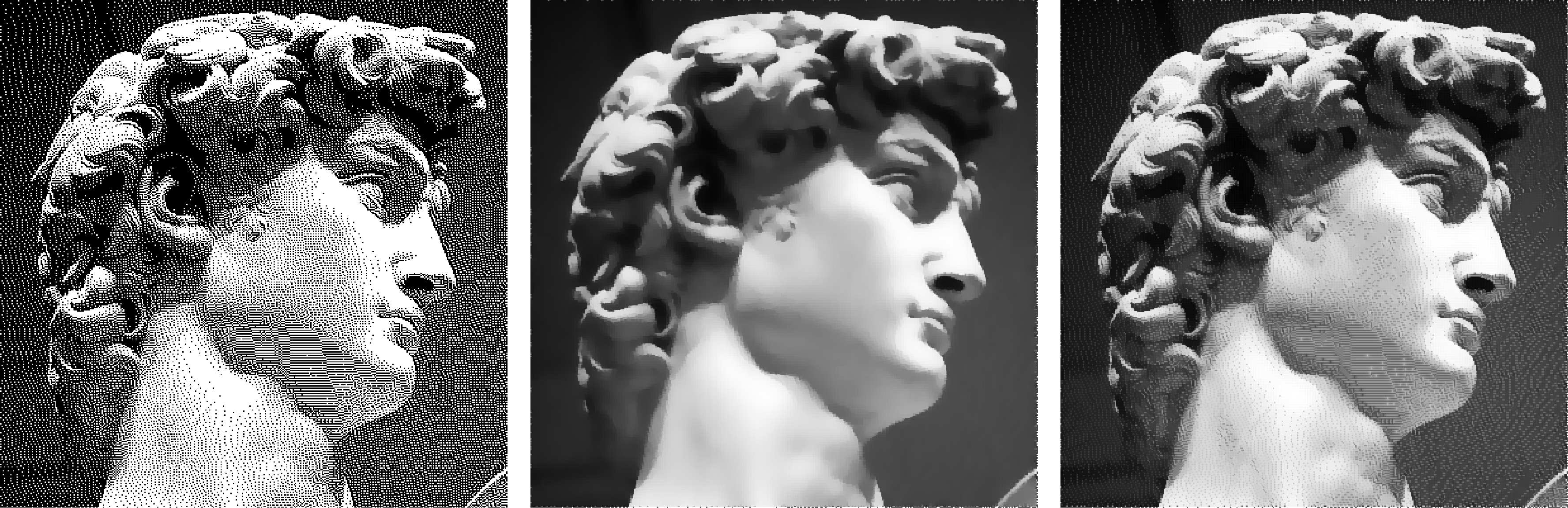}
	\caption{Dithering reconstruction problem. From left to right: Dithered image, TV-Wasserstein and ROF results.}
	\label{fig:dithering}
\end{figure}

\section{Conclusion}
In this work we revisited the TV-Wasserstein problem. We showed how it can be related to the classical~\eqref{ROF} problem and how to exploit this to derive the Euler-Lagrange equations, obtaining further regularity. We proposed a Douglas-Rachford algorithm to solve it and presented two numerical experiments: the first one being coherent with theoretical predictions and the second being an application to the reconstruction of dithered images. 

\subsubsection*{Acknowledgements} The second author acknowledges support from the ANR CIPRESSI project, grant 19-CE48-0017-01 of the French Agence Nationale de la Recherche. The third author acknowledges the financial support from FMJH concerning his master thesis, when this work took place.

\appendix
\section{Properties of (ROF)}\label{appendix.ROF}
\subsection{The Rudin-Osher-Fatemi (ROF) problem}

In this appendix we prove some properties about problem~\eqref{ROF} and the total variation functional that are used throughout the text. \if{
It is easy to see that \eqref{ROF} has a unique solution since the functional $u \mapsto \TV(u) + \norm{u - g}^2_{L^2(\Omega)}$ is strongly convex and l.s.c, a standard application of the direct method of the calculus of variations gives the result. In particular, this means that if we find the Euler-Lagrange equations for this problem, these will be necessary and sufficient for optimality. 

Therefore, let $u$ be such minimizer and $v \in L^2(\Omega)$ arbitrary. Then comparing the energies of $u$ and $v$ we have 
\begin{align*}
	\lambda\left(\TV(v) - \TV(u)\right) 
	&\ge \frac{1}{2}\int_{\Omega} \left((u - g)^2 - (v - g)^2\right)\dd x \\
	&= \int_{\Omega}(v - u)(g - u)\dd x - \frac{1}{2}\int_{\Omega}(u - v)^2\dd x.
\end{align*}
Now taking $u + t(v - u)$, for $t \in [0,1]$, as test function in the previous inequality we obtain that
\begin{equation}
	\lambda\left(\TV(u + t(v - u)) - \TV(u)\right) - t\int_{\Omega}(v - u)(g - u)\dd x \ge - \frac{t^2}{2}\int_{\Omega}(u - v)^2\dd x,
\end{equation}
which implies that 
\begin{equation}
	\lambda\left(\TV(u + t(v - u)) - \TV(u)\right) - t\int_{\Omega}(v - u)(g - u)\dd x \ge 0.
\end{equation}
Taking $t = 1$, we actually obtain the following Euler-Lagrange equation 
\begin{equation}
	\frac{g - u}{\lambda} \in \partial \TV(u).
\end{equation}
To extract further information, we need to characterize the subdifferential of $\TV$, which we will do in the next section in the more general case of one-homogeneous functionals. 
}\fi We shall start with general properties of 1-homogeneous functionals and use it to derive the characterization of the subdifferential of $\TV$ from Proposition~\ref{proposition.ROF_TV_properties}. 

\begin{lemma}{\cite[Thm.~7.57]{guide2006infinite}}\label{lemma.subdifferential_one_homo_functional}
	Let $X$ be a reflexive Banach space and $J:X \to \mathbb{R}\cup \{+\infty\}$ be a convex, positively one-homogeneous functional, {\em i.e.}~$J(\lambda u) = |\lambda|J(u)$ for all $\lambda \in \mathbb{R}$ and $u \in X$. Then 
	\begin{equation}
	\partial J(u) = \left\{
	p \in \partial J(0): \inner{p, u} = J(u)
	\right\}.
	\end{equation}
	In particular, if  $J$ is the support function: 
	\begin{equation}
		J(u) = \sup_{p \in C} \inner{p, u}, 
	\end{equation}
	of a set $C$ in the dual space $X^\star$,
	then the subdifferential $\partial J(0) = \overline{\textup{conv}C}$.
\end{lemma}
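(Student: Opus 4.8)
The plan is to establish the two inclusions defining $\partial J(u)$ using only the positive homogeneity of $J$, and then to treat the support-function case by a Hahn--Banach separation argument. First note that homogeneity forces $J(0) = J(0\cdot u) = 0$. To prove the inclusion $\partial J(u) \subseteq \{p \in \partial J(0): \inner{p,u} = J(u)\}$, I would take $p \in \partial J(u)$ and feed two special test points into the defining inequality $J(v) \ge J(u) + \inner{p, v-u}$: the choice $v = 2u$ gives $2J(u) \ge J(u) + \inner{p,u}$, hence $J(u) \ge \inner{p,u}$, while the choice $v = 0$ gives $0 \ge J(u) - \inner{p,u}$, hence $\inner{p,u} \ge J(u)$; together these yield the equality $\inner{p,u} = J(u)$.

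To see that moreover $p \in \partial J(0)$, I would exploit homogeneity more fully. Replacing $v$ by $tv$ with $t>0$ in the subdifferential inequality and using $J(tv) = tJ(v)$ together with $\inner{p,u} = J(u)$ produces $tJ(v) \ge t\inner{p,v}$; dividing by $t$ gives $J(v) \ge \inner{p,v} = J(0) + \inner{p, v-0}$ for every $v$, which is exactly $p \in \partial J(0)$. The reverse inclusion is immediate: if $p \in \partial J(0)$ satisfies $\inner{p,u} = J(u)$, then for every $v$ one has $J(v) \ge \inner{p,v} = \inner{p, v-u} + J(u)$, which is precisely $p \in \partial J(u)$. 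This closes the first identity.

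For the support-function case, I would observe that the first part specializes at $u=0$ to $\partial J(0) = \{p \in X^\star: \inner{p,v} \le J(v) \text{ for all } v \in X\}$, and that $J(v) = \sup_{q \in C}\inner{q,v}$ is unchanged if $C$ is replaced by its closed convex hull, so that $J = \sigma_{\convbar C}$ is the support function of $\convbar C$. Thus $p \in \partial J(0)$ if and only if $\inner{p,v} \le \sigma_{\convbar C}(v)$ for all $v$, and the Hahn--Banach separation theorem recovers a (weak-$\star$) closed convex set $K$ from its support function as $K = \{p: \inner{p,v} \le \sigma_K(v)\ \forall v\}$; applying this with $K = \convbar C$ yields $\partial J(0) = \convbar C$. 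The main obstacle, and the only delicate point, is the choice of closure: since $X$ is reflexive the weak-$\star$ and weak topologies on $X^\star$ coincide and, for convex sets, agree with the norm closure, so $\convbar C$ may be read as the norm-closed convex hull; one should also record that the argument requires $\sigma_C$ to be proper, which holds in the intended application where $C$ is bounded.
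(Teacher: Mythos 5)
Your proof is correct and follows essentially the same strategy as the paper's: the first identity is obtained in both cases by testing the subdifferential inequality at special points ($v=0$ and scaled vectors) together with positive one-homogeneity, and the support-function claim is settled by Hahn--Banach separation, using that $\partial J(0)$ is a weak-$\star$ closed convex set containing $C$. The differences are cosmetic: you get $\inner{p,u}=J(u)$ directly from the test point $v=2u$ rather than by the paper's contradiction-by-scaling argument, and your remark that reflexivity makes the weak-$\star$, weak and norm closures of convex sets coincide is a sound clarification of which closure is meant in the statement.
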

\if{
\begin{proof}
	Setting $A:= \left\{
	p \in \partial J(0): \inner{p, u} = J(u)
	\right\}$, this set is contained in $\partial J(u)$ since $J(0) = 0$ by one-homogeneity and, if $p \in \partial J(0)$, we have
	\begin{equation*}
	J(v) \ge \inner{p, v}, \text{ for all $v \in X$.}
	\end{equation*}
	Summing and subtracting $J(u) = \inner{p,u}$ we obtain the subdifferential inequality. 
	
	For the converse inclusion, take some $p \in \partial J(u)$, and taking $v = 0$ in the sub-differential inequality, we have 
	\begin{equation*}
	J(u) \le \inner{p, u}.
	\end{equation*} 
	Now it suffices to prove that $\inner{p, v} \le J(v)$ for all $v \in X$, as this is equivalent to $p \in \partial J(0)$ and implies the equality $J(u) = \inner{p,u}$. By contradiction, if there is some $v$ such that $J(v) - \inner{p, v} < 0$, take $\lambda v$ instead of $v$ for $\lambda>0$, then by the one-homogeneity we have 
	\begin{equation*}
	\lambda\underbrace{\left(J(v) - \inner{p, v}\right)}_{< 0} \ge J(u) - \inner{p, u}.
	\end{equation*}
	Taking $\lambda$ large enough we arrive at a contradiction.  
	
	For the second statement, notice that it suffices to prove that $C \subset \partial J(0) \subset \overline{C}$, since $\partial J(0)$ being a subdifferential implies that it is already weak-$\star$ closed and convex. 
	
	The first inclusion follows from the definition; we pass to the second. Suppose there is some $p_0 \in \overline{C}\setminus \partial J(0)$, then $\{p_0\}$ is compact, convex and disjoint from $\partial J(0)$. Using the convex separation Theorem in $X^\star$, see~\cite[Chap.~2.5 Prop.~4]{bourbaki2003topological} or ~\cite[Problem 9, page 447]{brezis2011functional}, we can find a continuous linear functional in the weak-$\star$ topology, that is some $v \in X$, that strictly separates $\{p_0\}$ and $\partial J(0)$ giving 
	\begin{equation*}
		\inner{p_0, v} > \sup_{p \in \partial J(0)} \inner{p_0, v} \ge J(v).
	\end{equation*}
	We then arrive at a contradiction. 
\end{proof}
}\fi

Recalling the definition of the $\TV$ functional as
\begin{equation}\label{TV_setK}
	\TV(u) = \sup_{p \in \mathcal{K}} \int_{\Omega}p(x)u(x)\dd x, \text{ where }
	\mathcal{K}:=
	\left\{
		p = \text{div}\phi: 
		\begin{array}{c}
			\phi \in C^1_c(\Omega; \mathbb{R}^N)\\
			\norm{\phi}_{\infty} \le 1
		\end{array}
	\right\},
\end{equation}
from Lemma~\ref{lemma.subdifferential_one_homo_functional}  
$\partial_{L^2} \TV(0) = \overline{\mathcal{K}}$,
with the closure being taken with respect to the weak topology of $L^2(\Omega)$. It then follows that, for $\Omega$ convex and bounded, the subdifferential of $\TV$ in $L^2$ assumes the form
\begin{equation*}
	\partial \TV(u) = 
	\left\{
		p = -\ddiv (z): 
		\begin{array}{c}
			z \in H^1_0(\ddiv, \Omega), \ \norm{z}_{\infty} \le 1 \\
			\TV(u) = \int_{\Omega}p(x)u(x)\dd x
		\end{array}
	\right\},
\end{equation*}
where $H^1_0(\ddiv; \Omega)$ denotes the closure of $C^\infty_c(\Omega; \mathbb{R}^d)$ with respect to the norm $\norm{z}_{H^1(\ddiv)}^2 = \norm{z}_{L^2(\Omega)}^2 + \norm{\ddiv z}_{L^2(\Omega)}^2$.
\if{
\begin{lemma}[\cite{bredies2016pointwise}]\label{lemma.closure_setK}
	For $1\le p < \infty$, the closure of $\mathcal{K}$ in the weak topology of $L^q(\Omega)$ is given by
	\[
		\overline{\mathcal{K}} = K \eqdef
		\left\{
			\ddiv z: 
			\begin{array}{c}
				z \in W^q_0(\ddiv ;\Omega)\\
				\norm{z}_{\infty} \le 1
			\end{array}
		\right\}, 
		\text{ where $\frac{1}{p} + \frac{1}{q} = 1$.}
	\]
\end{lemma}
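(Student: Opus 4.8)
\emph{Strategy.} I would prove the claimed identity by the two inclusions $K\subset\overline{\mathcal K}$ and $\overline{\mathcal K}\subset K$. For the first I would use that $\overline{\mathcal K}$ is nothing but the subdifferential $\partial_{L^q}\TV(0)$, so that membership reduces to a single one-sided estimate obtained by integration by parts. For the second I would show that $K$ is a convex, weakly closed set containing $\mathcal K$, and hence contains the weak closure $\overline{\mathcal K}$. Throughout, exponents satisfy $\tfrac1p+\tfrac1q=1$ and I denote generic elements of the sets by $w$.

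\emph{The inclusion $K\subset\overline{\mathcal K}$.} Since $\mathcal K$ is convex (it is the image of the convex set $\{\phi\in C^1_c:\norm{\phi}_\infty\le1\}$ under the linear map $\ddiv$) and $\TV=\sup_{w\in\mathcal K}\inner{w,\cdot}$ is its support function, Lemma~\ref{lemma.subdifferential_one_homo_functional} identifies $\overline{\mathcal K}=\partial_{L^q}\TV(0)=\{w\in L^q:\inner{w,u}\le\TV(u)\text{ for all }u\in L^p\}$. Now take $w=\ddiv z\in K$. For $u\notin\BV(\Omega)$ the inequality $\inner{w,u}\le\TV(u)=+\infty$ is trivial, while for $u\in\BV(\Omega)\cap L^p$ I would invoke the Gauss--Green formula for the bounded field $z\in W^q_0(\ddiv;\Omega)$ paired against $Du$; since $z$ has vanishing normal trace the boundary term drops and $\inner{w,u}=\int_\Omega u\,\ddiv z\,\dd x=-\int_\Omega z\cdot\dd Du\le\norm{z}_\infty|Du|(\Omega)\le\TV(u)$. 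Hence $w\in\overline{\mathcal K}$.

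\emph{The inclusion $\overline{\mathcal K}\subset K$.} First, $\mathcal K\subset K$, since any $\phi\in C^1_c$ is approximated in the $W^q(\ddiv)$ norm by mollification and thus lies in $W^q_0(\ddiv;\Omega)$, with the sup bound preserved. The set $K$ is convex, so by Mazur's lemma its weak closure coincides with its strong closure in $L^q$, and it suffices to show $K$ is strongly closed. Given $w_n=\ddiv z_n\to w$ in $L^q$ with $z_n\in W^q_0(\ddiv;\Omega)$ and $\norm{z_n}_\infty\le1$, boundedness in $L^\infty$ yields a weak-$\star$ limit $z_n\rightharpoonup^\star z$ with $\norm{z}_\infty\le1$, and testing against $C^\infty_c$ fields identifies $\ddiv z=w\in L^q$. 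The remaining---and central---point is that $z\in W^q_0(\ddiv;\Omega)$, i.e. that the homogeneous normal trace survives passage to the limit.

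\emph{Main obstacle.} The delicate step is precisely this persistence of the boundary condition. I would handle it through the normal-trace characterization of $W^q_0(\ddiv;\Omega)$ available on the convex (hence Lipschitz) domain $\Omega$: membership is equivalent to $\int_\Omega(z\cdot\nabla\varphi+\varphi\,\ddiv z)\,\dd x=0$ for every $\varphi\in C^\infty(\overline\Omega)$. Each $z_n$ satisfies this identity, and passage to the limit is legitimate because $\nabla\varphi\in L^1(\Omega)$ pairs with the weak-$\star$ convergence $z_n\rightharpoonup^\star z$ while $\varphi\in L^p$ pairs with $\ddiv z_n\to\ddiv z$ in $L^q$; the identity therefore passes to the limit and forces $z\cdot\nu=0$, so $w\in K$. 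The alternative, more elementary route---approximating a general $z\in W^q_0(\ddiv;\Omega)$ directly by compactly supported smooth fields while retaining $\norm{\phi}_\infty\le1$---is the one I would avoid, since truncating the approximants to enforce the sup bound is hard to reconcile with convergence of the divergences; routing the argument through the support-function identity and the normal-trace formula sidesteps this difficulty.
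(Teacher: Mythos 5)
Your proof is correct, and its skeleton matches the paper's (two inclusions, with $K\subset\overline{\mathcal K}$ reduced to the support-function inequality $\int_\Omega wv\,\dd x\le\TV(v)$, and $\overline{\mathcal K}\subset K$ reduced to convexity plus strong closedness of $K$), but the two key technical steps are resolved with genuinely different tools. For $K\subset\overline{\mathcal K}$, the paper does not invoke an Anzellotti-type Gauss--Green formula: it approximates $v\in\BV(\Omega)\cap L^p(\Omega)$ by smooth functions via the Meyers--Serrin theorem with $\TV(v_n)\to\TV(v)$ and integrates by parts against the smooth approximants, so only the elementary identity $\int_\Omega v_n\,\ddiv z\,\dd x=-\int_\Omega\nabla v_n\cdot z\,\dd x$ is needed; your route outsources exactly this regularization step to the pairing theory of the cited reference, which is legitimate but heavier (note that the proof of the pairing bound $|(z,Du)|\le\norm{z}_\infty|Du|$ is itself a mollification-of-$u$ argument, so the two routes are morally the same with the work hidden in different places). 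For the persistence of the constraint $z\in W^q_0(\ddiv;\Omega)$ under weak limits---the step you rightly single out as the main obstacle---the paper uses a more self-contained device: the set $\left\{(f,\ddiv f): f\in W^q_0(\ddiv;\Omega),\ \norm{f}_\infty\le1\right\}$ is convex and strongly closed in $L^q(\Omega;\mathbb{R}^{N+1})$, hence weakly closed, and the pair $(z_{n_k},\ddiv z_{n_k})$ converges weakly to $(z,h)$, which places the limit in this set. This graph trick needs no boundary regularity and no trace theory, whereas your argument rests on the nontrivial theorem that on a Lipschitz domain $W^q_0(\ddiv;\Omega)$ coincides with the kernel of the normal trace; the implication you actually use (from the integral identity back to membership in the closure of $C^\infty_c$) is the hard direction of that theorem, which is itself a density result of the same nature as the statement being proved. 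Finally, two minor points: the paper treats the endpoint $p=1$, $q=\infty$ separately by replacing weak with weak-$\star$ convergence of $(z_n)$, a case your write-up does not address; and your appeal to Lemma~\ref{lemma.subdifferential_one_homo_functional} is strictly speaking outside its hypotheses there, since $L^1(\Omega)$ is not reflexive---your argument adapts, but the case deserves a word.
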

\begin{proof}
	It is enough to prove that $\mathcal{K} \subset K \subset \overline{\mathcal{K}}$ and that $K$ is a closed set. The first inclusion follows from the definition; let us first prove that $K$ is closed. Since $K$ is convex, it is weakly closed if and only if it is strongly closed. 
	
	First consider $1<p$ and $q < \infty$. Take $\left(\text{div} z_n\right)_{n \in \mathbb{N}} \subset K$ converging in $L^q(\Omega)$ to $h$. Since $\norm{z_n}_{\infty} \le 1$ and $\Omega$ is bounded, this sequence is also bounded in $L^q(\Omega)$, and we can extract a weakly convergent subsequence, $z_{n_k} \rightharpoonup z$. Then, for all $\phi \in C^{\infty}_c(\Omega; \mathbb{R}^N)$, we have
	\begin{align*}
		\int_{\Omega}z\cdot \nabla \phi\dd x 
		= \lim_{k \to \mathbb{N}} 
		\int_{\Omega} z_{n_k}\cdot \nabla \phi\dd x 
		= \lim_{k \to \mathbb{N}} 
		-\int_{\Omega} \text{div}z_{n_k} \phi\dd x 
		= -\int_{\Omega} h\phi\dd x.
	\end{align*} 
	So $h = \text{div}z \in L^q(\Omega)$ and $z \in W^q(\text{div};\Omega)$. 
	
	To prove that $z \in W^q_0(\text{div};\Omega)$, consider the set 
	\[
	\left\{
		(f, \text{div}f): f \in W^q_0(\text{div};\Omega), \ \norm{f}_{\infty} \le 1
	\right\} \subset L^q(\Omega; \mathbb{R}^{N+1}),
	\]
	which is convex and strongly closed, hence also weakly closed. Hence, as $z_{n_k}\in W^p(\ddiv; \Omega)$ and $\left(z_{n_k}, \text{div}z_{n_k}\right)_{k \in \mathbb{N}}$ converges weakly, the conclusion follows. For the case $p=1,q = \infty$ we can instead take a subsequence of $\left(z_n\right)_{n \in \mathbb{N}}$ converging in the weak-$\star$ topology and use analogous arguments.  

	Finally, we must show that $K \subset \overline{\mathcal{K}}$. For this, it suffices to show that, for every $v \in L^p(\Omega)$
	\[
		\int_{\Omega}p(x) v(x) \dd x \le \TV(v), \text{ for all $p \in K$}.
	\]
	Given some $v \in L^p(\Omega)\cap$BV$(\Omega)$, thanks to Meyers-Serrin's Theorem~\cite{ambrosio2000functions} we can take a sequence $v_n$ converging to $v$ in $L^p(\Omega)$ and such that 
	$
		\TV(v_n) \xrightarrow[n \to \infty]{} \TV(v). 
	$
	Hence
	\begin{align*}
		\int_{\Omega}v\text{div}z\dd x 
		= \lim_{n \to \infty} 
		\int_{\Omega}v_n\text{div}z\dd x
		= \lim_{n \to \infty} 
		-\int_{\Omega}\nabla v_n\cdot z\dd x
		\le \lim_{n \to \infty} 
		\int_{\Omega}|\nabla v_n|\dd x = \TV(v).
	\end{align*}
	The result follows. 
\end{proof}
\begin{corollary}\label{corollary.subdiff_TV}
	For $1\le p < \infty$, the subdifferential $\partial_{L^p} \TV(u) \subset L^q(\Omega)$ is given by
	\begin{equation*}
	\partial \TV(u) = 
	\left\{
		p = \ddiv (z): 
		\begin{array}{c}
			z \in W^p_0(\ddiv, \Omega), \ \norm{z}_{\infty} \le 1 \\
			\TV(u) = \int_{\Omega}p(x)u(x)\dd x
		\end{array}
	\right\}. 
	\end{equation*}
\end{corollary}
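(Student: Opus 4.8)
The plan is to derive this description directly from Lemma~\ref{lemma.subdifferential_one_homo_functional}, once the weak closure of the set $\mathcal{K}$ in~\eqref{TV_setK} has been identified. Viewed on $X = L^p(\Omega)$, the functional $\TV$ is convex and positively one-homogeneous, and by~\eqref{TV_setK} it is exactly the support function of $\mathcal{K}\subset L^q(\Omega) = X^\star$. Since $\phi\mapsto\ddiv\phi$ is linear and $\{\phi\in C^1_c(\Omega;\mathbb{R}^N):\norm{\phi}_\infty\le 1\}$ is convex, the set $\mathcal{K}$ is itself convex, so Lemma~\ref{lemma.subdifferential_one_homo_functional} yields both $\partial\TV(0) = \convbar\mathcal{K} = \overline{\mathcal{K}}$, the closure taken in the weak-$L^q$ topology (weak-$\star$ when $q=\infty$), and $\partial\TV(u) = \{p\in\partial\TV(0):\inner{p,u}=\TV(u)\}$. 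Everything therefore reduces to proving the identity $\overline{\mathcal{K}} = K$, where $K \eqdef \{\ddiv z : z\in W^q_0(\ddiv;\Omega),\ \norm{z}_\infty\le 1\}$.

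The core of the argument, and where I expect the main difficulty, is this identity; I would establish it by showing $\mathcal{K}\subset K\subset\overline{\mathcal{K}}$ together with weak closedness of $K$. The inclusion $\mathcal{K}\subset K$ is immediate from the definition. For weak closedness, since $K$ is convex it suffices to prove strong closedness: given $\ddiv z_n\to h$ in $L^q(\Omega)$ with $\norm{z_n}_\infty\le 1$, the uniform bound together with boundedness of $\Omega$ makes $(z_n)$ bounded in $L^q$, so a weakly (or weakly-$\star$ if $q=\infty$) convergent subsequence $z_n\rightharpoonup z$ can be extracted, and testing against $\nabla\phi$ with $\phi\in C^\infty_c(\Omega;\mathbb{R}^N)$ identifies $h=\ddiv z$ distributionally. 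To recover the membership $z\in W^q_0(\ddiv;\Omega)$ with $\norm{z}_\infty\le 1$, I would use that the graph set $\{(f,\ddiv f):f\in W^q_0(\ddiv;\Omega),\ \norm{f}_\infty\le 1\}\subset L^q(\Omega;\mathbb{R}^{N+1})$ is convex and strongly closed, hence weakly closed, and that $(z_n,\ddiv z_n)$ converges weakly into it. For the reverse inclusion $K\subset\overline{\mathcal{K}}$ I would argue by the support-function viewpoint: it is enough to verify $\int_\Omega v\,\ddiv z\,\dd x\le\TV(v)$ for every $v\in L^p(\Omega)$ and every $\ddiv z\in K$; approximating $v$ by $v_n\in C^\infty$ with $v_n\to v$ in $L^p$ and $\TV(v_n)\to\TV(v)$ through Meyers--Serrin, an integration by parts gives $\int_\Omega v_n\,\ddiv z\,\dd x = -\int_\Omega\nabla v_n\cdot z\,\dd x\le\int_\Omega|\nabla v_n|\,\dd x=\TV(v_n)$, and passing to the limit yields the estimate.

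With $\overline{\mathcal{K}}=K$ in hand, substituting into $\partial\TV(u)=\{p\in\partial\TV(0):\inner{p,u}=\TV(u)\}$ produces precisely the asserted formula. The two points I expect to require the most care are, first, the boundary-regularity claim $z\in W^q_0(\ddiv;\Omega)$ rather than merely $z\in W^q(\ddiv;\Omega)$, i.e.\ that the weak limit retains the vanishing-normal-trace membership in the closure of $C^\infty_c$; this is exactly what the closedness of the graph set above is designed to deliver. Second, the endpoint $p=1$, $q=\infty$: there $L^1$ is not reflexive, so both Lemma~\ref{lemma.subdifferential_one_homo_functional} and the compactness extraction must be carried out in the weak-$\star$ topology of $L^\infty$ in place of reflexive weak compactness, the separation step relying on the preduality $(L^1)^\star=L^\infty$. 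The approximation with convergence of energies $\TV(v_n)\to\TV(v)$ in the $K\subset\overline{\mathcal{K}}$ step is standard, and is where the hypothesis $v\in\BV(\Omega)$ (finite energy) enters.
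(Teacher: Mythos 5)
Your proposal is correct and follows essentially the same route as the paper: it reduces the corollary to Lemma~\ref{lemma.subdifferential_one_homo_functional} plus the identification $\overline{\mathcal{K}}=K$, which you establish exactly as the paper does (the inclusion $\mathcal{K}\subset K$ by definition, weak closedness of $K$ via convexity, strong closedness and the graph-set argument for the $W^q_0(\ddiv;\Omega)$ membership, and $K\subset\overline{\mathcal{K}}$ via Meyers--Serrin approximation and the support-function inequality), with the same weak-$\star$ adaptation at the endpoint $p=1$, $q=\infty$.
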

}\fi

Next we prove another property of the subdifferential of $\TV$ based on the coarea formula (see~\cite[Thm.~3.40]{ambrosio2000functions}): for any $u \in \BV(\Omega)$ it holds that 
\begin{equation}\label{eq.coarea_formula}
    \TV(u) = \int_{\mathbb{R}}\Per(\{u \ge t\})\dd t. 
\end{equation}
\begin{lemma}\label{lemma.subdifferential_h_lemma}
	For any $u \in \BV(\Omega)$ and $p \in \partial \TV(u)$, it holds that 
	\begin{equation*}
		p \in \partial \TV(u^+),\quad
		-p \in \partial \TV(u^-).
	\end{equation*}
\end{lemma}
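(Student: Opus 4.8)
The plan is to exploit the characterization of $\partial\TV$ furnished by Lemma~\ref{lemma.subdifferential_one_homo_functional}: since $\TV$ is convex, even and positively $1$-homogeneous, $p\in\partial\TV(u)$ holds if and only if $p\in\partial\TV(0)$ \emph{and} the saturation identity $\int_\Omega p\,u\,\dd x=\TV(u)$ holds, where membership $p\in\partial\TV(0)$ is equivalent to the one-sided bound $\int_\Omega p\,v\,\dd x\le\TV(v)$ for every $v\in\BV(\Omega)$. Two consequences are immediate. First, because $\TV(-v)=\TV(v)$, the bound for $p$ is also a bound for $-p$, so $p\in\partial\TV(0)$ forces $-p\in\partial\TV(0)$ as well. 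Thus for both claims it only remains to establish the two saturation identities $\int_\Omega p\,u^+\,\dd x=\TV(u^+)$ and $\int_\Omega(-p)\,u^-\,\dd x=\TV(u^-)$.

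The engine for these identities is the coarea formula~\eqref{eq.coarea_formula} together with the layer-cake representation of a nonnegative function. First I would record the additivity $\TV(u)=\TV(u^+)+\TV(u^-)$: splitting the coarea integral at the origin, for $t>0$ one has $\{u>t\}=\{u^+>t\}$, while for $t<0$ the relative perimeter is invariant under complementation, $\Per(\{u>t\};\Omega)=\Per(\{u^-\ge -t\};\Omega)$, and the change of variables $s=-t$ turns the two pieces into $\TV(u^+)$ and $\TV(u^-)$. Next, applying the layer-cake formula to the nonnegative functions $u^{\pm}$ and interchanging the order of integration by Fubini gives
\begin{equation*}
	\int_\Omega p\,u^+\,\dd x=\int_0^\infty\!\!\left(\int_{\{u^+>t\}}p\,\dd x\right)\dd t,
	\qquad
	\int_\Omega(-p)\,u^-\,\dd x=\int_0^\infty\!\!\left(\int_{\{u^->t\}}(-p)\,\dd x\right)\dd t.
\end{equation*}
Testing the bounds $p,-p\in\partial\TV(0)$ against the indicators $\mathds{1}_{\{u^+>t\}}$ and $\mathds{1}_{\{u^->t\}}$ yields $\int_{\{u^+>t\}}p\,\dd x\le\Per(\{u^+>t\})$ and $\int_{\{u^->t\}}(-p)\,\dd x\le\Per(\{u^->t\})$ for a.e.\ $t>0$, whence, integrating in $t$ and using the coarea formula once more, $\int_\Omega p\,u^+\,\dd x\le\TV(u^+)$ and $\int_\Omega(-p)\,u^-\,\dd x\le\TV(u^-)$.

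These two inequalities must in fact be equalities: combining them with the additivity above and the hypothesis $\int_\Omega p\,u\,\dd x=\TV(u)$ gives
\begin{equation*}
	\TV(u^+)+\TV(u^-)=\TV(u)=\int_\Omega p\,u^+\,\dd x-\int_\Omega p\,u^-\,\dd x\le\TV(u^+)+\TV(u^-),
\end{equation*}
so the chain collapses and both saturation identities hold. Invoking Lemma~\ref{lemma.subdifferential_one_homo_functional} in the reverse direction then delivers $p\in\partial\TV(u^+)$ and $-p\in\partial\TV(u^-)$.

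I expect the only delicate points to be bookkeeping rather than conceptual. The main technical care goes into the negative-part piece of the coarea decomposition---matching $\{u>t\}$ for $t<0$ with a level set of $u^-$ via the complementation invariance of the \emph{relative} perimeter $\Per(\cdot;\Omega)$, which is precisely the feature consistent with the homogeneous Neumann / $H^1_0(\ddiv;\Omega)$ setting underlying the characterization---and into justifying the Fubini interchange, for which the integrability $p\in L^2(\Omega)$ together with $u^{\pm}\in L^1(\Omega)$ suffices. One should also note that $\{u>t\}$ and $\{u\ge t\}$ differ only for countably many $t$, so the choice of level set is immaterial under the $\dd t$ integral.
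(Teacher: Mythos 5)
Your proof is correct and follows essentially the same route as the paper's: both reduce the claim, via the one-homogeneous characterization of Lemma~\ref{lemma.subdifferential_one_homo_functional}, to the saturation identities for $u^{\pm}$, and both obtain these from the coarea formula together with the layer-cake/Fubini computation of $\int_\Omega p\,u\,\dd x$ and the sign of the defects $\Per(\{u^{\pm}>s\})-\int_{\{u^{\pm}>s\}}(\pm p)\,\dd x$. The only cosmetic difference is that the paper concludes these defects vanish for a.e.\ $s$ before integrating, whereas you integrate first and collapse the resulting chain of inequalities.
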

\begin{proof}
	We can show that for a.e. $s \in \mathbb{R}$ it holds that $\Per\left(\{u > s\}\right) = \Per\left(\{u \ge s\}\right) = \Per\left(\{u \ge s\}^c\right)$. In addition, by Lemma~\ref{lemma.subdifferential_one_homo_functional}, if $p \in \partial \TV(u)$, one has
	\begin{align*}
	\TV(u) 
    &= \int_{0}^{+\infty} \Per(\{u^+ > s\})\dd s + \int_{0}^{+\infty} \Per(\{u^- > s\})\dd s\\
	= 
    \int_{\Omega}p(x)u(x)\dd x 
    &= \int_{0}^{+\infty}\int_{\{u^+ > s\}}p\dd x \dd s- 
	\int_{0}^{+\infty}\int_{\{u^- > s\}}p\dd x \dd s.
	\end{align*} 
	In particular, we have 
	\begin{equation*}
	\int_0^{+\infty} \underbrace{
		\left(\Per(\{u^+ > s\}) - \int_{\{u^+ > s\}} p(x)\dd x\right)
	}_{\ge 0}\dd s = \int_0^{+\infty} \underbrace{
		\left(-\Per(\{u^- > s\}) - \int_{\{u^- > s\}} p(x)\dd x\right)
	}_{\le 0}\dd s.
	\end{equation*}
	Since the integrands on each side have constant and opposite signs, for a.e.~$s \in \mathbb{R}$ 
	\[
	    \Per(\{u^+ > s\}) = \int_{\{u^+ > s\}} p(x)\dd x \text{ and } 
	    \Per(\{u^- > s\}) = \int_{\{u^- > s\}} -p(x)\dd x.
	\]
	Integrating over $[0,+\infty)$, the coarea formula gives
	\begin{equation*}
		\TV(u^+) = \int_{\Omega} p(x)u^{+}(x)\dd x \text{ and } \TV(u^-) = \int_{\Omega} -p(x)u^{-}(x)\dd x,
	\end{equation*}
	and the result follows from Lemma~\ref{lemma.subdifferential_one_homo_functional}. 
\end{proof}

\if{
After the discussion from section \ref{corollary.subdiff_TV}, we conclude that if $u$ solves \eqref{ROF}, then there exists some $z \in H^1_0(\text{div}, \Omega)\cap L^{\infty}(\Omega)$ such that 
\begin{equation}
\left\{
\begin{array}{ll}
	-\lambda\text{div}z(x) + u(x) = g(x),& \text{ a.e. in $\Omega$}\\
	|z(x)| \le 1,& \text{ a.e. in $\Omega$}\\
	z\cdot\nu = 0,& \text{ on $\partial\Omega$}\\
	\TV(u) = \int_{\Omega}-\text{div}z(x)u(x)\dd x.
\end{array}
\right.
\end{equation}

A few comments are necessary regarding these optimality conditions.
\begin{enumerate}
	\item For the purpose of solving (ROF), the natural choice is to define $\TV$ as a functional over $L^2(\Omega)$, hence the vector field $z$ is taken in $H^1_0(\text{div}, \Omega)$.
	\item The boundary conditions $z\cdot \nu = 0$ are understood in the sense of the boundary trace. 
	\item The final condition however can be strengthened. Indeed, it is a global characterization of the vector field $z$, however pointwise characterizations of the subdifferential of $\TV$ have been studied by Anzellotti in \cite{anzellotti1983pairings} and further developed in \cite{bredies2016pointwise} and others.
	
	Given a function $u \in$BV$(\Omega)$, he defined a trace operator $z \mapsto [z, Du]$ such that
	\[
		\int_{\Omega} [z,Du]\phi \dd|Du| = - \int_{\Omega} u \text{div}(\phi z)\dd x = \int_{\Omega} \phi z \cdot \dd Du. 
	\]
	It is then proven that 
	\[
	p \in \partial \TV(u) \Leftrightarrow 
	\begin{array}{c}
	\exists \ z \in W^p_0(\text{div}, \Omega), \ \norm{z}_{\infty} \le 1\\
	p = -\text{div}z, \ [z,Du] = 1 \text{ in $L^1(\Omega; |Du|)$}.
	\end{array}
	\]
	In combination with the definition of this trace operator, it is equivalent to $z\cdot Du = |Du|$. 
	\item The previous point implies that, whenever $u > 0$ is smooth enough, we can formally interpret the Euler-Lagrange equation as 
	\[
		-\lambda\text{div}\frac{\nabla u(x)}{|\nabla u(x)|} + u(x) = g(x). 
	\]
\end{enumerate}
}\fi

We finish this appendix with a property of the solutions of~\eqref{ROF}: if $u$ solves~\eqref{ROF}, then its positive part solves the same problem with an additional positivity constraint (see also~\cite[Lemma~A.1]{chambolle2004algorithm}). 
\begin{theorem}\label{theorem.ROF_positive_solution}
	A function $u$ is the solution of~\eqref{ROF} if, and only if,
    \[
      u-g \in \lambda \partial_{L^2}\TV(u).  
    \]
    In addition, it holds that
	\begin{equation}
	u^+ \in \argmin_{v \ge 0} \TV(v) + \frac{1}{2\lambda}\int_{\Omega} |v(x) - g(x)|^2\dd x. 
	\end{equation}
\end{theorem}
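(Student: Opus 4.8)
The plan is to prove the equivalence first and then deduce the positive-part statement from it, using the one-homogeneity machinery already established in Lemma~\ref{lemma.subdifferential_one_homo_functional} and the positive/negative part splitting of Lemma~\ref{lemma.subdifferential_h_lemma}. For the equivalence, I would argue by a standard convex-analytic Fermat argument. The functional $F(u) = \TV(u) + \frac{1}{2\lambda}\norm{u-g}_{L^2}^2$ is convex, lower semicontinuous, and coercive on $L^2(\Omega)$, so the direct method gives existence; strong convexity of the quadratic term gives uniqueness. Since $u \mapsto \frac{1}{2\lambda}\norm{u-g}_{L^2}^2$ is everywhere finite and continuous (hence its subdifferential is the singleton gradient $\frac{u-g}{\lambda}$) and $\TV$ is convex l.s.c., the Moreau--Rockafellar sum rule applies without qualification issues, so $0 \in \partial F(u)$ if and only if $0 \in \frac{u-g}{\lambda} + \partial_{L^2}\TV(u)$, which rearranges to $u - g \in \lambda\,\partial_{L^2}\TV(u)$. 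Because $F$ is convex, $0 \in \partial F(u)$ is equivalent to $u$ being the global minimizer, giving both directions at once.

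Next I would treat the positivity statement. Assume $u$ solves~\eqref{ROF}, so $\frac{g-u}{\lambda} \in \partial_{L^2}\TV(u)$, i.e.\ $p := \frac{g-u}{\lambda} \in \partial_{L^2}\TV(u)$. By Lemma~\ref{lemma.subdifferential_h_lemma} we immediately get $p \in \partial_{L^2}\TV(u^+)$. The goal is to show $u^+$ satisfies the Euler--Lagrange inequality for the constrained problem $\min_{v \ge 0} G(v)$ where $G(v) = \TV(v) + \frac{1}{2\lambda}\norm{v-g}_{L^2}^2$. I would write the constrained optimality condition via the indicator $\chi_{\{v \ge 0\}}$: $u^+$ minimizes $G + \chi_{\{v\ge0\}}$ iff $0 \in \partial_{L^2}\TV(u^+) + \frac{u^+-g}{\lambda} + \partial\chi_{\{v\ge0\}}(u^+)$. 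The subdifferential of the nonnegativity indicator at $u^+$ is the normal cone, consisting of functions $q \le 0$ with $q = 0$ on $\{u^+ > 0\} = \{u > 0\}$. So it suffices to exhibit such a $q$ with $p + \frac{u^+ - g}{\lambda} = -q$, equivalently $q = -p - \frac{u^+-g}{\lambda} = \frac{g-u}{\lambda} \cdot(-1) + \ldots$; more cleanly, using $p = \frac{g-u}{\lambda}$ I would verify directly that the candidate multiplier $q := \frac{u^+ - u}{\lambda} = \frac{-u^-}{\lambda} \le 0$ vanishes on $\{u>0\}$ (there $u^- = 0$) and closes the inclusion, since $p + \frac{u^+-g}{\lambda} = \frac{g-u}{\lambda} + \frac{u^+-g}{\lambda} = \frac{u^+-u}{\lambda} = \frac{u^-}{\lambda}\cdot(-1)\cdot(-1)$; tracking the sign shows the residual is exactly $-q$ with $q \le 0$ supported on $\{u \le 0\}$, as required.

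The main obstacle I anticipate is handling the normal cone of the nonnegativity constraint cleanly in the $L^2$ setting, rather than the convex-analytic bookkeeping, which is routine. Specifically, I must make sure that the sum rule $\partial(G + \chi_{\{v\ge0\}}) = \partial G + \partial\chi_{\{v\ge0\}}$ is valid; this needs a qualification condition, but here it holds because $G$ is continuous at some feasible point (e.g.\ any bounded nonnegative BV function in the interior of the effective domain), so Moreau--Rockafellar applies. The second delicate point is verifying that the candidate multiplier $q = -u^-/\lambda$ genuinely lies in the normal cone to $\{v \ge 0\}$ at $u^+$: one must check $q \le 0$ a.e., which is clear since $u^- \ge 0$, and that $\int_\Omega q\,u^+ \, dx = 0$, which holds because $u^+ u^- = 0$ pointwise. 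An alternative, perhaps cleaner, route that avoids the normal cone entirely is the truncation/energy-comparison argument used in the proof of Theorem~\ref{theorem.Euler-Lagrange_TVJKO}: compare $G(u^+)$ against $G(v)$ for arbitrary $v \ge 0$ by noting $\TV(u^+) \le \TV(u^+ \wedge$ something$)$ type inequalities together with the fact that truncation does not increase the quadratic distance to $g$ when $g$ is compared against its relevant part; I would fall back to this direct comparison if the subdifferential sum rule proves fussy.
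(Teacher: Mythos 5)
Your proof is correct and is essentially the paper's own argument. For the equivalence the paper likewise invokes Fermat's rule together with smoothness of the quadratic term; for the positive-part claim the paper also applies Lemma~\ref{lemma.subdifferential_h_lemma} to get $p=\tfrac{g-u}{\lambda}\in\partial_{L^2}\TV(u^+)$, writes $g-u^+=\lambda p-u^-$, and shows $\lambda p-u^-\in\partial_{L^2}\bigl(\lambda\TV+\chi_{\{v\ge 0\}}\bigr)(u^+)$ via the one-line estimate $\lambda\bigl(\TV(v)-\TV(u^+)-\inner{p,v-u^+}\bigr)\ge 0\ge-\inner{u^-,v}$ for $v\ge 0$; your normal-cone multiplier $q=-u^-/\lambda$ is exactly this same object, just named explicitly.

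One caveat on your ``main obstacle'': your justification of the sum rule $\partial(G+\chi_{\{v\ge0\}})=\partial G+\partial\chi_{\{v\ge0\}}$ is incorrect, because $G$ is \emph{nowhere} continuous on $L^2(\Omega)$ --- $\TV$ is not locally bounded in the $L^2$ topology, so the Moreau--Rockafellar continuity criterion cannot be applied to split off $\partial\chi_{\{v\ge0\}}$. This is harmless for your proof, however: you only use the implication ``decomposition exists $\Rightarrow u^+$ minimizes,'' which rests on the unconditional inclusion $\partial_{L^2}\TV(u^+)+\bigl\{\tfrac{u^+-g}{\lambda}\bigr\}+N_{\{v\ge0\}}(u^+)\subset\partial\bigl(G+\chi_{\{v\ge0\}}\bigr)(u^+)$ (sum the three subgradient inequalities, exactly the computation the paper performs); the qualification condition would only be needed for the reverse inclusion, which neither you nor the paper ever uses.
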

\begin{proof}
    The first property is a direct consequence of Fermat's rule and the fact that the $L^2$ norm is smooth. For the second, since the constrained problem remains strictly convex, it suffices to show that 
	\begin{equation}
		g - u^+ \in \partial_{L^2}\left(\lambda \TV + \chi_{v \ge 0}\right)(u^+).
	\end{equation}
	Writing $u = u^+ - u^-$ with both $u^+, u^- \ge 0$, from Lemma~\eqref{lemma.subdifferential_h_lemma} and the Euler-Lagrange equation of~\eqref{ROF}:
    \[
        g-u \in \lambda\partial_{L^2}\TV(u),    
    \]
    we know that 
	\begin{equation*}
		g - u^+ = -u^- + \lambda p, \text{ with $p \in \partial_{L^2} \TV(u^+)$}.
	\end{equation*}
	So it suffices to show that $-u^- + \lambda \partial_{L^2}\TV(u^+) \subset \partial_{L^2}\left(\lambda \TV + \chi_{v \ge 0}\right)(u^+)$. Take some $v \ge 0$ and since $p \in \partial_{L^2} \TV(u^+)$, we have 
	\[
		\lambda\left(\TV(v) - \TV(u^+) - \inner{p, v - u^+}\right) \ge 0 \ge - \inner{u^-, v}. 
	\]
	Rearranging the terms we obtain the desired relation $\lambda p - u^- \in \partial_{L^2}\left(\lambda \TV + \chi_{v \ge 0}\right)(u^+)$. 
\end{proof}

\section{N-dimensional example}\label{appendix.Ndim}
In this appendix we prove the theoretical characterization of the optimal radius used in the first experiment of Section~\ref{section.experiments}.

\begin{lemma}[\cite{carlier2019total}, Lemma 2.1]\label{lemma.sufficient_condition}
	Let $\rho_0 \in \mathcal{P}_2(\mathbb{R}^d)$ and take $\Omega = \mathbb{R}^d$. If $\rho_1 \in \text{BV}\left(\mathbb{R}^d\right)\cap \mathcal{P}_2(\mathbb{R}^d)$ and there exists $p \in \partial \TV(\rho_1)$ satisfying
	\begin{equation}
		\frac{\psi}{\tau} + p \ge 0, \text{ with equality $\rho_1$-a.e.}
	\end{equation}
	then $\rho_1$ solves~\eqref{TV-W}. 
\end{lemma}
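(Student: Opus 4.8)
The plan is to exploit the convexity of the whole energy and to recognize the stated condition as precisely a first-order (Fermat) optimality condition, which for a convex problem is sufficient. Both terms of~\eqref{TV-W} are convex in $\rho$: the functional $\TV$ is convex by definition, while $\rho \mapsto \frac{1}{2\tau}W_2^2(\rho_0, \rho)$ is convex because, by the dual formulation in~\eqref{wasserstein_distance}, it is a supremum of affine functions of $\rho$. It therefore suffices to prove, for an arbitrary competitor $\rho \in \mathcal{P}_2(\mathbb{R}^d)$, a lower bound $\mathcal{J}(\rho) \ge \mathcal{J}(\rho_1)$ where $\mathcal{J} = \TV + \frac{1}{2\tau}W_2^2(\rho_0, \cdot)$, and I would obtain it by summing two subgradient inequalities. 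We may assume $\TV(\rho) < \infty$, for otherwise the bound is trivial; then the embedding $\BV(\mathbb{R}^d) \hookrightarrow L^{d/(d-1)}$ guarantees that $\rho$ is absolutely continuous, so that the pairings below are meaningful.

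First I would record the two subgradient inequalities. Since $\psi$ is the Kantorovich potential associated with $\rho_1$, normalized so as to be the first variation of $\tfrac12 W_2^2(\rho_0, \cdot)$, the dual formulation yields
\begin{equation*}
	\frac{1}{2\tau}W_2^2(\rho_0, \rho) \ge \frac{1}{2\tau}W_2^2(\rho_0, \rho_1) + \frac{1}{\tau}\int_{\mathbb{R}^d} \psi \dd(\rho - \rho_1),
\end{equation*}
the finiteness of $\int \psi \dd\rho$ following from $\rho \in \mathcal{P}_2$ together with the at-most-quadratic growth of $\psi$. On the other hand, $p \in \partial \TV(\rho_1)$ means exactly
\begin{equation*}
	\TV(\rho) \ge \TV(\rho_1) + \int_{\mathbb{R}^d} p \dd(\rho - \rho_1).
\end{equation*}

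Summing the two inequalities gives
\begin{equation*}
	\mathcal{J}(\rho) \ge \mathcal{J}(\rho_1) + \int_{\mathbb{R}^d}\left(\frac{\psi}{\tau} + p\right)\dd(\rho - \rho_1),
\end{equation*}
so it only remains to show that the last integral is nonnegative. Splitting it, the part tested against $\rho_1$ vanishes by the complementary-slackness hypothesis that $\frac{\psi}{\tau} + p = 0$ holds $\rho_1$-a.e., while the part tested against $\rho$ is nonnegative because $\frac{\psi}{\tau} + p \ge 0$ everywhere and $\rho \ge 0$. Hence $\mathcal{J}(\rho) \ge \mathcal{J}(\rho_1)$ for every competitor, which is the claim. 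The only genuinely delicate points are bookkeeping ones: fixing the normalization of $\psi$ so that it is the first variation of $\frac{1}{2}W_2^2(\rho_0, \cdot)$ (which is what makes the coefficient $\frac{1}{\tau}$, rather than $\frac{1}{2\tau}$, appear), and checking that $\int p \dd\rho$ and $\int \psi \dd\rho$ are finite on the unbounded domain $\mathbb{R}^d$; both are handled by the $L^{d/(d-1)}$ integrability of finite-energy competitors together with their finite second moments. Note also that the linear mass constraint embedded in $\mathcal{P}_2$ never needs to be invoked directly, since the whole argument uses only $\rho \ge 0$ and the complementary slackness on $\rho_1$.
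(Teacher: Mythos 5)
Your proof is correct and takes essentially the same route as the paper: the paper likewise tests the Kantorovich dual problem for $(\rho_0,\rho)$ at the potentials $(\varphi,\psi)$ optimal for $(\rho_0,\rho_1)$, combines this with the subgradient property of $p$, and concludes from $\frac{\psi}{\tau}+p\ge 0$ together with equality $\rho_1$-a.e.\ and $\rho\ge 0$. The only difference is organizational: you sum two subgradient inequalities, while the paper chains the estimates using the 1-homogeneous form of the subdifferential ($\TV(\rho)\ge\int p\,\dd\rho$ and $\TV(\rho_1)=\int p\,\dd\rho_1$), which is equivalent.
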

\begin{proof}
	By definition, for any $\rho \in \text{BV}(\Omega)\cap \mathcal{P}_2(\mathbb{R}^d)$, one has 
	\[
		\TV(\rho) \ge \int_{\mathbb{R}^d}p\dd \rho,
	\]
	So using Kantorovitch duality and taking $(\varphi, \psi)$ the potentials between $\rho_0$ and $\rho_1$, for every $\rho$ one obtains
	\begin{align*}
		\frac{1}{2}W^2_2(\rho_0, \rho) 
		&\ge \int\varphi \dd\rho_0 + \int \psi \dd \rho 
		=   \int\varphi \dd\rho_0 + \int \psi \dd \rho_1 + \int \psi \dd(\rho - \rho_1)\\
		&= \frac{1}{2}W_2^2(\rho_0, \rho_1) + \int \psi \dd(\rho - \rho_1)\\
		&\ge \frac{1}{2}W_2^2(\rho_0, \rho_1) + \tau\left(\TV(\rho_1) - \int p\dd\rho\right)
	\end{align*}
	Which, by the definition of $\TV$, implies that
	\begin{equation*}
		\TV(\rho_1) + \frac{1}{2\tau}W^2_2(\rho_0, \rho_1) \le \TV(\rho) + \frac{1}{2\tau}W^2_2(\rho_0, \rho),
	\end{equation*}
	for all $\rho \in \text{BV}(\Omega)\cap \mathcal{P}_2(\Omega)$, and therefore $\rho_1$ is a minimizer of \eqref{TV-W}.
\end{proof}

Given an initial radius $r_0$, we set
\begin{equation}
	\mathcal{E}_{\tau, r_0}(\rho) := \frac{1}{2\tau}W^2_2(\rho_{r_0}, \rho) + \TV(\rho).
\end{equation} 
We minimize $\mathcal{E}_{\tau, r_0}(\rho_{r_1})$ as a real valued function of $r_1$ and check that the minimizer $\rho_{r_1}$ satisfies the sufficient condition~\ref{lemma.sufficient_condition} with a well tailored $z$. 

The first step is to find the optimal transport map (if it exists). From~\cite[Thm.~1.48]{santambrogio2015optimal} it suffices to find a map $T_\sharp \rho_{r_0} = \rho_{r_1}$ that can be written as the gradient of a convex function. It is easy to check that $T = \frac{r_1}{r_0}\text{id}$ is the gradient of $u(x) := \frac{r_1}{2r_0}|x|^2$ and $\rho_{r_1} = T_{\sharp}\rho_{r_0}$.
\if{
\begin{align*}
	\int_{\mathbb{R}^d}\phi \dd(T_{\sharp}\rho_{r_0}) 
	&= \int_{B(0,r_0)}\phi\left(\frac{r_1}{r_0}x\right) \frac{\dd x}{\omega_dr_0^d}\\
	&= \int_{B(0,r_1)}\phi\left(y\right) \frac{1}{\omega_dr_0^d}\left(\frac{r_0}{r_1}\right)^d\dd y\\
	&= \int_{\mathbb{R}^d}\phi(y)\dd \rho_{r_1}.
\end{align*}
}\fi

In the sequel, let us compute $\mathcal{E}_{\tau, r_0}(\rho_{r_1})$ for some $r_1$. The Wasserstein term can be easily computed using the optimal map, namely 
\begin{align*}
	\frac{1}{2\tau}W^2_2\left(\rho_{r_0}, \rho_{r_1}\right)
	&= \frac{1}{2\tau}\int_{B(0,r_0)} \left|x - \frac{r_1}{r_0}x\right|^2\frac{\dd x}{\omega_dr^d_0}
	= \frac{1}{2\tau}\left(1 - \frac{r_1}{r_0}\right)^2\frac{1}{\omega_dr^d_0}\int_{B(0,r_0)} \left|x\right|^2\dd x\\
	&= \frac{1}{2\tau}\left(1 - \frac{r_1}{r_0}\right)^2\frac{1}{\omega_dr^d_0}\int_0^{r_{0}} r^2\mathcal{H}^{d-1}(\partial B(0,r))\dd r\\
	&= \frac{1}{2\tau}\left(1 - \frac{r_1}{r_0}\right)^2\frac{1}{\omega_dr^d_0}\int_0^{r_{0}} 2\pi \omega_{d-1} r^{d+1}\dd r\\
	&= \frac{1}{2(d+2)\tau}\frac{2\pi \omega_{d-1}}{\omega_d}(r_1 - r_0)^2.
\end{align*} 

To compute the total variation term, we will use the coarea formula. 
\begin{align*}
	\TV(\rho_{r_1}) 
	&= \int_{\mathbb{R}} \text{Per}\left(\{\rho_{r_1} > s\}\right)\dd s
	= \int_0^{1/\omega_dr_1^d} \text{Per}\left(\{\rho_{r_1} > s\}\right)\dd s\\
	&= \frac{1}{\omega_dr_1^d} \text{Per}\left(B(0, r_1)\right)
	= \frac{1}{\omega_dr_1^d} \mathcal{H}^{d-1}\left(\partial B(0, r_1)\right)\\
	&= \frac{2\pi \omega_{d-1}}{\omega_d}\frac{1}{r_1}.
\end{align*}
Hence, setting $K_d:= \frac{2\pi \omega_{d-1}}{\omega_d}$ we obtain 
\begin{equation*}
	\mathcal{E}_{\tau, r_0}(r_1) = \frac{K_d}{2(d+2)\tau}(r_1 - r_0)^2 + \frac{K_d}{r_1}, 
\end{equation*}
which is minimized by the positive root of 
\begin{equation}
	r_1^2(r_1 - r_0) = r_0^2(d+2)\tau .
\end{equation}

Now, supposing that some function $z$ satisfying the conditions of Lemma~\ref{lemma.sufficient_condition} 
exists, let us try to find it explicitly. Starting with the Kantorovitch potential $\psi$, we know that $T = \frac{r_0}{r_1}\text{id} = \text{id} - \nabla \psi$, which means that $\psi$ is of the form
\begin{equation*}
	\psi(x) = \frac{r_1 - r_0}{2r_1}|x|^2 + C. 
\end{equation*}
Hence, we look for $z$ such that
\[
	\ddiv z(x) = \frac{r_0 - r_1}{2\tau r_1}|x|^2 - \frac{C}{\tau}, \text{ for all $x \in B(0,r_1)$},
\]
and the constant $C$ can be computed explicitly with the relation 
\[
	\TV(\rho_{r_1}) = \frac{K_d}{r_1} = \int_{B(0,r_1)}\ddiv z\dd \rho_{r_1}.
\]
\if{namely 
\begin{align*}
	\frac{K_d}{r_1}
	&= \frac{1}{\omega_d r_1^d}\int_{B(0,r_1)}\left(-\frac{\psi}{\tau}\right)\dd x\\
	&= -\frac{C}{\tau} + \frac{1}{\tau \omega_d}\frac{r_0 - r_1}{2r_1^{d+1}}\int_{B(0, r_1)}|x|^2\dd x \\
	&= -\frac{C}{\tau} + \frac{1}{\tau \omega_d}\frac{r_0 - r_1}{2r_1^{d+1}}\int_0^{r_1}r^2\mathcal{H}^{d-1}(\partial B(0,r_1))\dd r\\
	&= -\frac{C}{\tau} + \frac{K_d}{2r_1},
\end{align*}
which gives $C = -\frac{K_d\tau}{2r_1}$ and over $B(0, r_1)$, $\ddiv(z)$ assumes the form 
\begin{equation*}
	\ddiv(z) = -\frac{r_1 - r_0}{2\tau r_1}|x|^2 + \frac{K_d}{2r_1}.
\end{equation*}
}\fi

In particular, we can take $z$ of the form 
\begin{equation*}
	z(x) = -\frac{r_1 - r_0}{2\tau r_1}x^3 + \frac{2}{r_1}x + z(0), \text{ for $x \in B(0,r_1)$},
\end{equation*}
and $x^3$ stands for the vector $\left(x_i^3\right)_{i = 1}^d$. The condition $\norm{z}_{L^\infty(B(0,r_1))} \le 1$ also holds for a suitable choice of $z(0)$ and to conclude it suffices to extend $z$ outside $B(0,r_1)$ in order to keep this bound and have compact support. As such extension always exists, we conclude.

%
%
\bibliographystyle{plain}
\bibliography{references.bib}

\end{document}